\theoremstyle{plain}
\newtheorem{Thm}{Theorem}[section]
\newtheorem{XThm}{Theorem}
\newtheorem{Lem}[Thm]{Lemma}
\newtheorem{Prop}[Thm]{Proposition}
\newtheorem{Cor}[Thm]{Corollary}
\theoremstyle{definition}
\newtheorem{Rmk}[Thm]{Remark}
\tikzstyle{vertex}=[circle, draw, inner sep=0pt, minimum size=6pt] 
\tikzstyle{vertex}=[circle, draw, inner sep=0pt, minimum size=6pt] 
\newcommand{\vertex}{\node[vertex]}
\title{Multipartite tournaments in which any two vertices have an $(i,j)$-step common out-neighbor}
\author[1]{\small Myungho Choi}
\author[1]{\small Suh-Ryung Kim}
\affil[1]{\footnotesize Department of Mathematics Education, Seoul National University, Seoul 08826}
\affil[ ]{\footnotesize\textit{nums8080@snu.ac.kr,  srkim@snu.ac.kr}}
\date{}
 \newcounter{statement}
\begin{document}
\maketitle
\begin{abstract}
     We say that a digraph $D$ is $(i,j)$-step competitive if any two vertices have an $(i,j)$-step common out-neighbor in $D$ and that a graph $G$ is $(i,j)$-step competitively orientable if there exists an $(i,j)$-step competitive orientation of $G$. 
     
 In [Choi et al. Competitively orientable complete multipartite graphs. Discrete Mathematics, 345(9):112950, 2022],
    Choi et al. introduce the notion of competitive digraph and completely characterize competitively orientable complete multipartite graphs in terms of the sizes of its partite sets.
    Here, a competitive digraph means a $(1,1)$-step competitive digraph.
     In this paper, the result of Choi et al. has been extended to a general characterization of $(i,j)$-step competitively orientable complete multipartite graphs.
         \end{abstract}

    \noindent
{\it Keywords.} $(i,j)$-step competitive digraph; $(i,j)$-step competitively orientable graph; complete multipartite graph; multipartite tournament;  $(i,j)$-step competition graph

\noindent
{{{\it 2010 Mathematics Subject Classification.} 05C20, 05C75}}

\section{Introduction}
In this paper, we consider finite graphs without loops and multiple edges and finite digraphs without loops, directed $2$-cycles, and multiple arcs.
For graph-theoretical terminology and notations not defined, we refer to \cite{bondy2010graph}. 

For a digraph $D$,
the \emph{underlying graph} of $D$
is the graph $G$
such that $V(G)=V(D)$ and $E(G)=\{ uv \mid (u,v) \in A(D) \}$. An \emph{orientation} of a graph $G$
is a digraph whose underlying graph is $G$.
   A \emph{tournament} is an orientation of a complete graph.
A \emph{$k$-partite tournament} is  an orientation of a complete $k$-partite graph for some positive integer $k \geq 2$.
If a digraph is a $k$-partite tournament for some integer $k \geq 2$, then it is called a \emph{multipartite tournament}.
Multipartite tournaments have been actively studied by  graph theorists (see \cite{aboulker2024heroes,figueroa2016strong,fisher1998domination,guo2012weakly,gutin2024note,jung2023competition,zhang2021hamiltonicity} and a survey paper \cite{volkmann2007multipartite}).

   Let $D$ be a digraph. For vertices $u$ and $v$ in $D$, the {\it distance} from $u$ to $v$, denoted by $d_D(u,v)$, is the number of arcs in a shortest directed $(u,v)$-path in $D$.
For positive integers $i$ and $j$, we say that two vertices $u$ and $v$ in $D$ {\it $(i,j)$-step compete}
if there exists a vertex $w$ such that
\begin{itemize}
\item[(i)] $1\leq d_{D-v}(u,w)\leq i$ and $1\leq d_{D-u}(v,w)\leq j$ or 
\item[(ii)] $1\leq d_{D-v}(u,w)\leq j$ and $1\leq d_{D-u}(v,w)\leq i$,
\end{itemize}
equivalently, 
\begin{itemize}
\item[(i)] there exist a directed $(u,w)$-walk of length at most $i$ not traversing $v$ and a directed $(v,w)$-walk of length at most $j$ not traversing $u$ or 
\item[(ii)] there exist a directed $(u,w)$-walk of length at most $j$ not traversing $v$ and a directed $(v,w)$-walk of length at most $i$ not traversing $u$.
\end{itemize}
We call $w$ in the above definition an {\it $(i,j)$-step common out-neighbor} of $u$ and $v$.
 The {\it $(i,j)$-step competition graph} of $D$ is the graph with the vertex set $V(D)$ and an edge $uv$ if and only if $u$ and $v$ $(i,j)$-step compete.
 This notion was introduced by Factor {\it et al.} \cite{factor20111} in 2011 as a generalization of competition graphs, which are $(1,1)$-step competition graphs.
 Research on $(i,j)$-step competition graphs for $(i,j)\neq (1,1)$ is generally difficult, so research has been mainly conducted on $(1,2)$-step competition graphs (see 
\cite{choi20171,factor20111,li20211,zhang20161} for papers related to this topic).

In this paper, we introduce the notion of   $(i,j)$-step competitive orientations of graphs.
We say that a digraph $D$ is {\it $(i,j)$-step competitive} if any two vertices $(i,j)$-step compete in $D$.
It is easy to see that a digraph is $(i,j)$-step competitive if and only if its $(i,j)$-step competition graph is a complete graph.
We call a graph $G$ {\it $(i,j)$-step  competitively orientable}  if there exists an $(i,j)$-step competitive orientation of $G$.

The notions of $(i,j)$-step competitive digraphs and $(i,j)$-step competitive orientations have been derived from the process of investigating the conditions under which $(i,j)$-step competition graphs of multipartite tournaments are complete graphs. Since the concept of $(i,j)$-step competition graphs is a generalization of competition graphs, this paper can be said to be a generalization of \cite{choi2022competitively} on competitively orientable graphs.

Choi \emph{et al.}~\cite{choi2022competitively} completely characterize $(1,1)$-step competitively orientable complete multipartite graphs in terms of the sizes of their partite sets.
Hereby, we only consider $(i,j)$-step competitively orientable complete multipartite graphs when $(i,j)\neq(1,1)$ and characterize them as follows.
\begin{XThm}\label{thm:(1,2)-step-bipartite}
 Let $n_1$ and $n_2$ be positive integers with $n_1 \geq n_2$.
  Then the complete bipartite graph $K_{n_1,n_2}$ is $(i,j)$-step competitively orientable for $(i,j)\neq (1,1)$ if and only if one of the following holds: 
  \begin{enumerate}[(a)]
\item   $i+j=3$, and (i) $n_2 \geq 6$ or (ii) $n_1\geq 10$ and $n_2=5$;
\item $i+j\geq4$ and $n_2\geq 4$;
\item $i\geq 2$, $j\geq 2$, $n_1 \geq 6$, and $n_2= 3$.
\end{enumerate}
\end{XThm}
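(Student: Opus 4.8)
The plan is to prove both directions by reducing $(i,j)$-step competition in an orientation of $K_{n_1,n_2}$ to a statement about out-neighborhoods and second out-neighborhoods, using the parity structure of bipartite tournaments. Writing $X$ and $Y$ for the parts with $|X|=n_1\ge n_2=|Y|$, every directed walk alternates between $X$ and $Y$, so a walk of length $\ell$ from a vertex stays in its own part iff $\ell$ is even. Consequently two vertices in the same part can reach a common vertex $w$ only along walks of equal parity, while a cross pair (one vertex in each part) needs walks of opposite parity. I would record this as a preliminary lemma and then, by the symmetry of the definition in $i$ and $j$, assume $i\le j$ and split into same-part pairs and cross pairs. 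In the case $i+j=3$, i.e.\ $(i,j)=(1,2)$, this forces the cleanest constraints: a same-part pair can meet only via two length-$1$ walks, hence must have a common out-neighbor, giving the two necessary conditions \textbf{(O1)} any two vertices of $X$ have a common out-neighbor in $Y$ and \textbf{(O2)} any two vertices of $Y$ have a common out-neighbor in $X$, while a cross pair must meet via one length-$1$ and one length-$2$ walk.

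For necessity I would translate \textbf{(O1)}--\textbf{(O2)} into a set system. With $A_x\subseteq Y$ the out-neighborhood of $x\in X$, condition \textbf{(O1)} says $\{A_x\}_{x\in X}$ is pairwise intersecting (and no $A_x$ is empty), while \textbf{(O2)}, after complementation, says that every $(n_2-2)$-subset of $Y$ contains some member $A_x$ (equivalently, for every pair $P\subseteq Y$ some $A_x$ is disjoint from $P$). A short counting argument then shows no such family exists when $n_2\le 4$; that when $n_2=5$ the intersecting requirement together with ``every $3$-subset contains a member'' essentially forces all ten $3$-subsets of $Y$ to occur, so $n_1\ge 10$; and that when $n_2\ge 6$ there is room to satisfy both for every $n_1\ge n_2$. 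This reproduces exactly the thresholds in (a), after a separate check that the length-$1$/length-$2$ cross condition adds no further obstruction. For $i+j\ge 4$ I would run the analogous but weaker analysis: a same-part pair may now also meet via two length-$2$ walks (when $i\ge 2$) or a length-$1$ and a length-$3$ walk (when $j\ge 3$), so \textbf{(O1)}--\textbf{(O2)} relax to the existence of a common out- or second-out-neighbor. Splitting on whether $i=1$ (which pins one vertex to an out-neighbor) or $i\ge 2$ then yields $n_2\ge 4$ in general and the extra possibility $n_2=3$, $n_1\ge 6$ precisely when $i,j\ge 2$, the small-$n_2$ failures again coming from the intersecting-family argument restricted to the available walk lengths.

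For sufficiency I would exhibit explicit orientations in each surviving regime and verify that every pair competes. The main building blocks: for $(1,2)$ with $n_2=5,n_1=10$, take the orientation whose out-neighborhoods $\{A_x\}$ realize all ten $3$-subsets of $Y$, which makes \textbf{(O1)}--\textbf{(O2)} hold and, one checks, lets every cross pair meet along a length-$1$/length-$2$ walk; for $n_2\ge 6$ use a circulant orientation in which each $A_x$ is a cyclic interval of length about $n_2/2$, so any two out-neighborhoods overlap and any two vertices of $Y$ are simultaneously avoided by some $A_x$. For $i+j\ge 4$ and for case (c) I would use similar circulant orientations, exploiting the extra reach of length-$2$ and length-$3$ walks to cover the small part, supplemented by a few ad hoc orientations at the boundary values $n_2\in\{3,4\}$ and $n_1=6$, and then confirm that each construction scales to all larger $n_1$.

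The main obstacle I expect is the boundary behavior rather than the bulk cases. Proving the sharp lower bound that $n_2=5$ forces $n_1\ge 10$ (and that $n_2\le 4$ is impossible for $(1,2)$) is an extremal covering estimate that must be pushed to its exact value, and it has to be matched by a construction verified simultaneously for same-part \emph{and} cross pairs; the cross-pair check is the delicate part, since the length-$2$ walks must avoid a prescribed vertex. Case (c) ($n_2=3$, $n_1\ge 6$, $i,j\ge 2$) similarly demands both a tight reason why $n_1\ge 6$ is necessary and an explicit orientation attaining it. Finally, some care is needed to keep the case analysis over all $(i,j)\ne(1,1)$ exhaustive: after reducing by $i\le j$, I would separate $i+j=3$, the range $i=1<j$ with $i+j\ge 4$, and $i,j\ge 2$, and check that the constructions extend uniformly as $n_1$ grows.
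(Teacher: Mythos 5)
Your skeleton largely parallels the paper's proof: the parity observation forcing same-part pairs to $(1,1)$-step compete when $(i,j)=(1,2)$, the translation into the family $\{A_x\}$ being pairwise intersecting while every pair of $Y$ is avoided by some $A_x$, the ten-distinct-witnesses argument for ``$n_2=5$ forces $n_1\ge 10$'' (which is correct: a witness serving two pairs would have size at most $2$ and then the pair it spans could not be avoided by any set meeting it), and the outdegree/in-neighbor argument ruling out $i=1$ when $n_2=3$. The two structural differences are that the paper does not reprove the extremal bipartite result but cites it (Theorem~\ref{thm:competition_realizable} of \cite{choi20171}, together with the $K_{10,5}$ orientation from that paper), and that the paper handles all larger $n_1,n_2$ in one stroke via the monotonicity statement (Lemma~\ref{lem:i,j-step-same-outneighbor}, Theorem~\ref{thm:supergraph}, Corollary~\ref{cor:making-(i,j)-step-multi-complete}) rather than checking that each construction ``scales''; you will need such a lemma anyway, since your constructions are only specified at the extremal sizes.

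There is, however, a genuine gap in your main sufficiency construction for case (a)(i): a circulant in which each $A_x$ is a \emph{cyclic interval} of length about $n_2/2$ can never satisfy (O1) and (O2) simultaneously, for any $n_2$. Indeed, suppose $\{A_x\}$ is a pairwise-intersecting family of cyclic intervals in $\mathbb{Z}_{n_2}$ such that every pair of elements is avoided by some member. Pick any member $I_0$ that avoids some pair, so $I_0$ is a proper interval; if $|I_0|\ge 2$, apply the covering condition to the pair formed by the two endpoints of $I_0$ to obtain a member $I_1$ missing both endpoints; since $I_1$ must meet $I_0$ and is cyclically consecutive, $I_1$ lies in the interior of $I_0$, so $|I_1|\le |I_0|-2$. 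Iterating yields a strictly decreasing chain of nonempty intervals, and the process crashes outright once an interval of length at most $2$ appears (a member must meet it while avoiding both of its points). Concretely for $K_{6,6}$, which is exactly the base case from which the whole regime $n_2\ge 6$, $n_1 \geq n_2$ is generated by monotonicity: an interval avoiding the antipodal pair $\{0,3\}$ must lie in $\{1,2\}$ or $\{4,5\}$, and the three antipodal pairs $\{0,3\},\{1,4\},\{2,5\}$ cannot all be avoided by pairwise-intersecting intervals. The fix is to use a non-interval family, e.g.\ the difference-set circulant $A_x=\{x,x+1,x+3\}\pmod 6$ (any two translates of $\{0,1,3\}$ meet because its difference set is all of $\mathbb{Z}_6$, and the complements $\{x+2,x+4,x+5\}$ cover all fifteen pairs), or the paper's explicit orientation $D_1$ of $K_{6,6}$. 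With that replacement, and with your ``short counting'' placeholders written out, your plan goes through; as stated, the central construction of case (a) fails.
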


\begin{XThm} \label{thm:tripartite-(i,j)}
	Let $n_1,n_2$, and $n_3$ be positive integers such that $n_1\geq n_2 \geq  n_3$.
Then the complete tripartite graph $K_{n_1,n_2,n_3}$ is $(i,j)$-step competitively orientable for $(i,j)\neq (1,1)$
 if and only if one of the following holds:
\begin{enumerate}[(a)]
\item $n_3\geq 2$;
\item $n_2 \geq 3$ and $n_3=1$;
\item $i\geq 2$, $j\geq 2$, $n_1 \geq 4$, $n_2=2$, and $n_3=1$.
\end{enumerate}
\end{XThm}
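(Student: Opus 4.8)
The plan is to treat the two directions separately after two normalizations. First, since the $(i,j)$-step competition relation is symmetric in $i$ and $j$, I assume throughout that $i\le j$, so that $(i,j)\ne(1,1)$ forces $j\ge 2$. Second, I record the monotonicity principle that if a digraph is $(i,j)$-step competitive and $i\le i'$, $j\le j'$, then it is $(i',j')$-step competitive, because enlarging the step bounds only enlarges the set of admissible common out-neighbors. Consequently, for sufficiency it is enough to produce competitive orientations at the \emph{smallest} admissible bounds in each regime, namely $(1,2)$ in cases (a) and (b) and $(2,2)$ in case (c); for necessity it is enough to rule out orientations in the pure-reachability limit, i.e. for arbitrarily large $j$ (when $i=1$) or arbitrarily large $i,j$.

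For necessity I argue the contrapositive: if none of (a)--(c) holds then $n_3=1$ and either $n_2=1$, or $n_2=2$ with $\min(i,j)=1$ or $n_1\le 3$. The cleanest case is $K_{n_1,1,1}$, with the two singleton parts $\{b\}$, $\{c\}$; orienting $b\to c$ without loss of generality, I classify each vertex of the large part $A$ by the orientation of its two arcs to $b$ and $c$. A vertex with both arcs incoming is a sink and hence cannot $(i,j)$-step compete with any vertex, so a competitive orientation contains no such vertex; but then the set of vertices reachable from $b$ while avoiding $c$ consists only of the ``$b\to a\to c$''-type vertices, the set reachable from $c$ while avoiding $b$ consists only of the ``$a\to b$, $c\to a$''-type vertices, and these two classes are disjoint. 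Thus $b$ and $c$ share no common out-neighbor for any step bounds, and $K_{n_1,1,1}$ is never competitively orientable. For $K_{n_1,2,1}$ I intend the same style of argument, organized around the three ``hub'' vertices $b_1,b_2,c$ (with $b_1,b_2$ nonadjacent): every vertex of $A$ has all of its out-neighbors inside the three-element set $\{b_1,b_2,c\}$, so a case analysis over the orientations of the arcs $b_1c$, $b_2c$ and over the resulting reachability classes of $A$ should expose a non-competing pair unless $i\ge 2$, $j\ge2$, and $n_1\ge4$. This last analysis is the step I expect to be the main obstacle, precisely because---unlike $K_{n_1,1,1}$---there is no single cut pair, and one must track how the distance-one requirement (when $i=1$) or the scarcity of vertices in $A$ (when $n_1\le 3$) forces a failure.

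For sufficiency I reduce to a few minimal configurations via a \emph{vertex-duplication lemma}: given a competitive tripartite tournament, adjoin to one part a new vertex $x$ that is a twin of an existing out-positive vertex $y$ of that part, so that $x$ is nonadjacent to $y$ and to the rest of its part and orients its arcs to the other two parts exactly as $y$ does. Old pairs keep their witnesses since no arcs are deleted; the twins $x,y$ compete through any common out-neighbor; and for a pair $(x,u)$ one transfers a witness $w^{*}$ of $(y,u)$ verbatim, rerouting the first arc of $y$'s walk through $x$ and using that the walk from $u$ already avoided $y$ (and hence avoids $x$). Iterating this lemma grows each part, so it remains to construct and verify the base cases $K_{2,2,2}$ for (a), $K_{3,3,1}$ for (b), and $K_{4,2,1}$ for (c), at the minimal bounds identified above. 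For $K_{2,2,2}$ the circulant orientation $t\to t+1,\,t+2 \pmod 6$, whose non-edges are exactly the antipodal pairs, is $(1,2)$-step competitive, as one checks on the three difference-classes of pairs; I expect analogous explicit orientations to settle $K_{3,3,1}$ and $K_{4,2,1}$, with the construction for (c) being the delicate one, since it must exploit both $n_1\ge 4$ and $i,j\ge2$, exactly the features that the necessity analysis shows to be unavoidable.
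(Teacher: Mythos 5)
Your skeleton matches the paper's: reduce by monotonicity to the extreme parameters ($(1,2)$ for cases (a), (b) and $(2,2)$ for case (c)), prove necessity by ruling out the complementary size/parameter patterns, and prove sufficiency from small base orientations grown by a duplication device. Your twin lemma is sound and is essentially the paper's ``add a vertex with the same out-neighborhood'' lemma (the new vertex cannot lie on any old witness walk, so all old competitions survive, and the transferred witness argument you give is correct). Your treatment of $K_{n_1,1,1}$ is complete and in fact more structural than the paper's, which simply counts arcs against the minimum-outdegree-two bound; your circulant orientation of $K_{2,2,2}$ does verify case (a)'s base case. However, the proposal has two genuine gaps, and they are exactly the two nontrivial parts of the theorem.

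First, the necessity for $n_3=1$, $n_2=2$ is not proven: you must show both that $n_1\le 3$ is impossible for every $(i,j)$, and that $\min(i,j)=1$ is impossible for every $n_1$, and you offer only an intended case analysis that you yourself flag as ``the main obstacle.'' The first half is a one-line count (outdegree at least $2$ everywhere forces $2(n_1+3)\le |A(D)|=3n_1+2$, so $n_1\ge 4$), which your reachability framework does not produce. The second half is where the paper needs a real idea: any $(1,j)$-step common out-neighbor has at least two in-neighbors; since vertices of the big part have degree $3$, such a witness would have outdegree at most $1$, so no witness lies in the big part. Applying this to the three pairs among the three small-part vertices $v_1,v_2,v_3$ forces the witnesses of two of the pairs to be $v_1$ and $v_2$ themselves, hence $v_3\to v_1$ and $v_3\to v_2$, leaving the pair $\{v_1,v_2\}$ with no admissible witness at all. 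Your proposed case analysis over the orientations of $b_1c$ and $b_2c$ would have to reconstruct something equivalent; as stated it does not, and in particular nothing in your sketch actually uses the hypothesis $i=1$, which is the crux. Second, sufficiency for cases (b) and (c) rests on base orientations of $K_{3,3,1}$ (which must be $(1,2)$-step competitive) and $K_{4,2,1}$ (which must be $(2,2)$-step competitive) that you never construct; ``I expect analogous explicit orientations'' is not a proof, and these are precisely the explicit digraphs $D_6$ and $D_7$ that the paper exhibits and verifies. Until those two constructions and the $n_2=2$ necessity argument are supplied, the theorem is not proved.
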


\begin{XThm} \label{thm:(i,j)-step-complete}
Let $k$ be a positive integer with $k \geq 4$ and $n_1,n_2,\ldots$, and $n_k$ be positive integers such that $n_1 \geq \cdots \geq n_k$.
Then the complete $k$-partite graph $K_{n_1,n_2,\ldots,n_k}$ is $(i,j)$-step competitively orientable for $(i,j)\neq (1,1)$
 if and only if one of the following holds:
\begin{itemize}
\item[(a)] $k=4$, and (i) $n_1 \geq 3$ and $n_2=1$ or (ii) $n_2 \geq 2$;
\item[(b)] $k \geq 5$.
\end{itemize}
\end{XThm}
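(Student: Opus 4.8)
The plan is to prove both directions by reducing the whole family of parameters $(i,j)\neq(1,1)$ to the single hardest instance $(i,j)=(1,2)$. The key monotonicity observation is that if $w$ is a $(1,2)$-step common out-neighbor of $u,v$, then it is an $(i,j)$-step common out-neighbor for every $(i,j)$ with $\max(i,j)\ge 2$, that is, for every $(i,j)\ne(1,1)$; conversely, any $(i,j)$-step common out-neighbor is in particular a vertex reachable from both $u$ and $v$ along directed walks avoiding the partner. Hence a single orientation that is $(1,2)$-step competitive is automatically $(i,j)$-step competitive for all $(i,j)\ne(1,1)$, which explains why the stated condition does not involve $(i,j)$: for sufficiency it is enough to build $(1,2)$-step competitive orientations, and for necessity it is enough to rule out even the most permissive walk-reachability competition.

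For the necessity direction I would first record a local obstruction, the \emph{out-degree lemma}: in any $(i,j)$-step competitive digraph every vertex has out-degree at least $2$. Indeed, a vertex $u$ of out-degree $0$ reaches nothing, and a vertex $u$ whose only out-neighbor is $v$ cannot reach any vertex along a walk avoiding $v$, so $u$ fails to compete with $v$. Since an orientation of $K_{n_1,\dots,n_k}$ has $\binom{n}{2}-\sum_i\binom{n_i}{2}$ arcs with $n=\sum_i n_i$, this bound forces $\binom{n}{2}-\sum_i\binom{n_i}{2}\ge 2n$. For $k\ge 5$ the conditions always hold, so necessity is vacuous; for $k=4$ the only parameter tuples violating (a) are $K_{1,1,1,1}$ and $K_{2,1,1,1}$, which have $6<8$ and $9<10$ arcs respectively and therefore admit no orientation in which every vertex has out-degree at least $2$, whence no $(i,j)$-step competitive orientation. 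This disposes of necessity for all $(i,j)\ne(1,1)$ at once.

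For sufficiency I would set up two monotonicity lemmas that both preserve the property ``$(1,2)$-step competitive and sink-free'', and then supply a small number of base orientations. The \emph{cloning lemma} says that duplicating a non-sink vertex $x$ into a twin $x'$ in the same part (giving $x'$ exactly the arcs of $x$) keeps the orientation $(1,2)$-step competitive: old pairs are unaffected, the pair $\{x,x'\}$ competes at any out-neighbor of $x$, and for a pair $\{x',v\}$ one transfers the short walks witnessing competition of $\{x,v\}$ by replacing the initial $x$ with $x'$. The \emph{new-part lemma} says that adding a new singleton part as a source (a new vertex dominating all current vertices) again preserves the property, since that source competes with every $v$ at any out-neighbor of $v$. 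Starting from explicit sink-free $(1,2)$-step competitive orientations of the minimal graphs---the rotational tournament on $K_{1,1,1,1,1}=K_5$ with out-neighbors $\{x+1,x+2\}$ in $\mathbb{Z}_5$, together with two hand-built out-degree-$2$ orientations of $K_{3,1,1,1}$ and $K_{2,2,1,1}$---these lemmas reach every target: for $k\ge 5$ add $k-5$ parts to $K_5$ and then clone up to the desired sizes, while for $k=4$ clone up from $K_{3,1,1,1}$ to cover case (a)(i) and from $K_{2,2,1,1}$ to cover case (a)(ii), since every admissible size vector dominates one of these two minimal vectors componentwise.

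I expect the main obstacle to be the bookkeeping in the cloning lemma rather than the base cases: one must check that replacing the first vertex of a witnessing walk by the twin never re-enters the deleted partner and never lengthens the walk past the $(1,2)$ budget, and that sink-freeness is genuinely preserved under iteration so the lemma can be applied repeatedly. The base-case verifications are finite and routine, since each of the three orientations can be checked pair-by-pair by exploiting its cyclic symmetry, and the necessity argument is immediate once the out-degree lemma is in hand; so the technical heart of the proof is making the two reduction lemmas airtight.
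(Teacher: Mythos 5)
Your proposal is correct and takes essentially the same route as the paper: necessity via the out-degree-at-least-two lemma plus arc counting (which rules out exactly $K_{1,1,1,1}$ and $K_{2,1,1,1}$), and sufficiency via $(1,2)$-step competitive base orientations of $K_{1,1,1,1,1}$, $K_{3,1,1,1}$, and $K_{2,2,1,1}$, extended componentwise by a duplication argument and then lifted to all $(i,j)\neq(1,1)$ by the $(1,2)$-to-$(i,j)$ inclusion. The only differences are packaging: the paper folds your cloning and new-part lemmas into a single supergraph theorem (Theorem~\ref{thm:supergraph} with Corollary~\ref{cor:making-(i,j)-step-multi-complete}), and where you promise hand-built base orientations of $K_{3,1,1,1}$ and $K_{2,2,1,1}$ it exhibits them explicitly as $D_8$ and $D_9$ in Figure~\ref{fig:tournaments-complete}.
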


\section{
Preliminaries}\label{sec:pre}

In this section, we investigate the properties of general $(i,j)$-step competitively orientable graphs, not limited to complete multipartite graphs, and these results are used to prove Theorems~\ref{thm:(1,2)-step-bipartite}, \ref{thm:tripartite-(i,j)}, and \ref{thm:(i,j)-step-complete} in Section~\ref{sec:multi}.

By the definition of $(i,j)$-step competitive digraphs,
the following proposition is obviously true.
\begin{Prop}\label{prop:i,j-step_inclusion}
Let $D$ be a digraph and $i$, $i'$, $j$,  and $j'$ be positive integers such that $i\leq i'$ and $j \leq j'$. Then the following are true:
\begin{enumerate}[(1)]
\item if two vertices $u$ and $v$ $(i,j)$-step compete in $D$,
then $u$ and $v$ $(i',j')$-step compete in $D$;
\item if $D$ is $(i,j)$-step competitive,
then $D$ is $(i',j')$-step competitive.
\end{enumerate}

\end{Prop}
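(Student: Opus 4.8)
The plan is to observe that this is a pure monotonicity statement: enlarging the allowed step-bounds can only make the defining distance inequalities easier to satisfy, so any witness remains a witness. Accordingly, I would prove part (1) directly from the definition and then derive part (2) as an immediate consequence.

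For part (1), suppose $u$ and $v$ $(i,j)$-step compete in $D$, and let $w$ be an $(i,j)$-step common out-neighbor of $u$ and $v$. By definition, $w$ satisfies either condition (i), namely $1\leq d_{D-v}(u,w)\leq i$ and $1\leq d_{D-u}(v,w)\leq j$, or condition (ii), namely $1\leq d_{D-v}(u,w)\leq j$ and $1\leq d_{D-u}(v,w)\leq i$. I would handle the two cases in parallel. In the first case, since $i\leq i'$ and $j\leq j'$, the same $w$ satisfies $1\leq d_{D-v}(u,w)\leq i\leq i'$ and $1\leq d_{D-u}(v,w)\leq j\leq j'$; the lower bound $1\leq\cdot$ is untouched, so $w$ is an $(i',j')$-step common out-neighbor of $u$ and $v$ via condition (i) of the $(i',j')$-definition. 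The second case is symmetric, using $j\leq j'$ and $i\leq i'$ to verify condition (ii). In either case $u$ and $v$ $(i',j')$-step compete, which is the claim.

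For part (2), I would simply quantify over pairs: if $D$ is $(i,j)$-step competitive, then by definition every two vertices $(i,j)$-step compete, and applying part (1) to each such pair shows that every two vertices $(i',j')$-step compete, i.e.\ $D$ is $(i',j')$-step competitive.

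There is no genuine obstacle here; the only points requiring care are bookkeeping ones. First, I must retain the disjunctive ``(i) or (ii)'' structure so that the witness is reassigned to the correct clause rather than silently assuming condition (i). Second, I should note explicitly that only the \emph{upper} bounds are relaxed while the lower bound $1\le d$ (which encodes that $w$ is genuinely reachable and distinct from the source) is preserved, so no degenerate witness is introduced. Given these remarks, the argument is immediate, consistent with the authors' description of the statement as ``obviously true.''
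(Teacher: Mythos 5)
Your proof is correct and is exactly the argument the paper has in mind: the paper states this proposition without proof as ``obviously true'' by the definition, and your write-up is simply that definitional monotonicity argument (the witness $w$ survives when the upper bounds are relaxed, case (i) or (ii) intact) made explicit, with part (2) following by quantifying over all pairs.
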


\begin{Lem}\label{lem:i,j-step-outdegree}
Let $D$ be a nontrivial $(i,j)$-step competitive digraph. Then each vertex has at least two out-neighbors.
\end{Lem}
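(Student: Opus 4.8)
The plan is to argue by contradiction in two stages: first ruling out a vertex with no out-neighbor, and then ruling out a vertex with exactly one out-neighbor. Throughout I will use the second (walk-based) formulation of the competition condition stated in the introduction: for $u$ and $v$ to $(i,j)$-step compete there must exist a vertex $w$ admitting a positive-length directed $(u,w)$-walk not traversing $v$ (equivalently, a walk lying in $D-v$) together with a positive-length directed $(v,w)$-walk not traversing $u$. Note that a positive-length $(u,w)$-walk avoiding $v$ is demanded in \emph{both} alternatives (i) and (ii), and that necessarily $w \notin \{u,v\}$.

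First I would show every vertex has at least one out-neighbor. Suppose $u$ has out-degree $0$. Then no directed walk of positive length starts at $u$, so $u$ can reach no vertex $w$; hence $u$ fails to $(i,j)$-step compete with every other vertex. Since $D$ is nontrivial it contains a vertex $v \neq u$, and $(i,j)$-step competitiveness forces $u$ and $v$ to compete, a contradiction. Thus each vertex has out-degree at least $1$.

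Next, suppose for contradiction that some vertex $u$ has exactly one out-neighbor, say $w$. I would test the pair $\{u,w\}$, which must $(i,j)$-step compete because $D$ is $(i,j)$-step competitive. Any $(i,j)$-step common out-neighbor $z$ of $u$ and $w$ requires a positive-length directed $(u,z)$-walk not traversing $w$, that is, a directed $(u,z)$-walk in $D-w$. But the only arc leaving $u$ is $(u,w)$, so in $D-w$ the vertex $u$ has no out-neighbor and originates no walk of positive length. Hence no such $z$ exists, contradicting that $u$ and $w$ compete. Therefore every vertex has at least two out-neighbors.

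I expect the only delicate point to be the bookkeeping around ``not traversing $v$'': one must read $d_{D-v}(u,w)$ as forcing the \emph{entire} $(u,w)$-walk to lie in $D-v$, so that deleting the unique out-neighbor truly strands $u$, and one must observe that the positive-length $(u,w)$-walk requirement appears identically in alternatives (i) and (ii), so that choosing the competitor to be the sole out-neighbor defeats the competition regardless of which alternative is attempted. No computation is needed beyond this.
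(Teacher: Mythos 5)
Your proof is correct and follows essentially the same two-stage argument as the paper: nontriviality plus competition with some other vertex forces an out-neighbor $w$, and then testing the pair $\{u,w\}$ shows a unique out-neighbor is impossible, since every positive-length walk leaving $u$ must pass through $w$. The extra care you take about the walk-avoidance condition appearing in both alternatives (i) and (ii) is exactly the point the paper compresses into ``$w$ lies on any directed path from $u$.''
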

\begin{proof}
	Take a vertex $u$ in $D$.
	Since $D$ is nontrivial,
	there exists a vertex $v$ distinct from $u$.
	In addition, since $u$ and $v$ $(i,j)$-step compete,
	$u$ has an out-neighbor $w$.
	If $w$ is the only out-neighbor of $u$,
	then $w$ lies on any directed path from $u$ and so $u$ and $w$ cannot $(i,j)$-step compete, which is impossible.
	Thus $u$ has at least two out-neighbors.
	\end{proof}

\begin{Prop}\label{prop:tournament}
Let $D$ be a nontrivial tournament.
Suppose that $i$ and $j$ are positive integers with $i+j\geq 3$.
Then each vertex in $D$ has at least two out-neighbors if and only if 
$D$ is $(i,j)$-step competitive.
\end{Prop}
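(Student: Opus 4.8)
The plan is to prove the two implications separately, noting that one direction is already in hand. If $D$ is $(i,j)$-step competitive, then Lemma~\ref{lem:i,j-step-outdegree} immediately gives that every vertex has at least two out-neighbors, so only the forward implication requires work. For that direction I would first reduce to the smallest relevant pair $(i,j)=(1,2)$. Since the $(i,j)$-step compete relation is symmetric in $i$ and $j$ (interchanging conditions (i) and (ii) of the definition), $D$ is $(i,j)$-step competitive if and only if it is $(\min\{i,j\},\max\{i,j\})$-step competitive; because $i+j\ge 3$ forces $\min\{i,j\}\ge 1$ and $\max\{i,j\}\ge 2$, Proposition~\ref{prop:i,j-step_inclusion}(2) shows that $(1,2)$-step competitiveness implies $(i,j)$-step competitiveness. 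Thus it suffices to prove that a tournament in which every vertex has at least two out-neighbors is $(1,2)$-step competitive.

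Next, fix two vertices $u$ and $v$; since the compete relation is also symmetric in $u$ and $v$, I may assume $(u,v)\in A(D)$, so that $v\in N^+(u)$ and $u\notin N^+(v)$. The strategy is a dichotomy according to whether $u$ can reach an out-neighbor of $v$ quickly. If there is $w\in N^+(v)$ with $d_{D-v}(u,w)\le 2$ — that is, either $w$ is a common out-neighbor of $u$ and $v$, or $w$ is reached by a length-$2$ walk $u\to z\to w$ with $z\in N^+(u)\setminus\{v\}$ and $w\in N^+(z)\cap N^+(v)$ — then, noting $d_{D-u}(v,w)=1$, condition (ii) of the definition certifies that $u$ and $v$ $(1,2)$-step compete.

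The remaining case is where no such $w$ exists, and here I would argue that the tournament structure forces the symmetric statement. Failure of the above means $N^+(u)\cap N^+(v)=\emptyset$ and no vertex of $N^+(u)\setminus\{v\}$ dominates a vertex of $N^+(v)$; because $D$ is a tournament and these two sets are disjoint, this is equivalent to saying that every vertex of $N^+(v)$ beats every vertex of $N^+(u)\setminus\{v\}$. Using that each vertex has at least two out-neighbors — so that $N^+(u)\setminus\{v\}$ and $N^+(v)$ are both nonempty — I pick $z'\in N^+(v)$ and $w'\in N^+(u)\setminus\{v\}$; then $v\to z'\to w'$ is a length-$2$ walk avoiding $u$, giving $d_{D-u}(v,w')\le 2$, while $d_{D-v}(u,w')=1$, so condition (i) certifies that $u$ and $v$ $(1,2)$-step compete.

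The main obstacle, and the place where both hypotheses are genuinely used, is this last case. The reduction to $(1,2)$ is what makes the length budget exactly large enough: one side may use a walk of length $2$, which is why $i+j\ge 3$ rather than $(i,j)=(1,1)$ is essential. The assumption that every vertex has at least two out-neighbors is used precisely to guarantee that $v\in N^+(u)$ still leaves another out-neighbor of $u$ to land on and that $N^+(v)$ is nonempty, so the routing $v\to z'\to w'$ exists. The only delicate bookkeeping is to check at each step that the chosen walks avoid the excluded vertex ($v$ in the $u$-walk and $u$ in the $v$-walk) and that the vertices involved are genuinely distinct, which follows from $N^+(u)\cap N^+(v)=\emptyset$ together with $u\notin N^+(v)$.
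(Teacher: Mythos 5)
Your proof is correct and takes essentially the same route as the paper's: both reduce to the $(1,2)$ case via Proposition~\ref{prop:i,j-step_inclusion}, then use the tournament arc between an out-neighbor of $u$ other than $v$ and an out-neighbor of $v$ other than $u$ (which the outdegree hypothesis provides), its direction deciding which vertex uses the length-$2$ walk. The paper merely organizes the dichotomy around whether $u$ and $v$ $(1,1)$-step compete instead of around $2$-step reachability from $u$, but the underlying mechanism is identical.
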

\begin{proof}
The ``if" part is an immediate consequence of Lemma~\ref{lem:i,j-step-outdegree}.
To show the ``only if" part, take two vertices $u$ and $v$ in $D$.
If $u$ and $v$ $(1,1)$-step compete, then they $(1,2)$-step compete by Proposition~\ref{prop:i,j-step_inclusion}(1).
Suppose $u$ and $v$ do not $(1,1)$-step compete.
Then, since each of $u$ and $v$ has at least two out-neighbors,
there exists an out-neighbor $u'$ (resp. $v'$) of $u$ (resp. $v$) distinct from $u$ and $v$.
Since $D$ is a tournament,
$u' \to v'$ or $v' \to u'$.
Thus, in each case, $u$ and $v$ $(1,2)$-step compete.
Hence $D$ is $(1,2)$-step competitive and so,
by Proposition~\ref{prop:i,j-step_inclusion}(2),
is $(i,j)$-step competitive.
\end{proof}

\begin{Lem} \label{lem:i,j-step-same-outneighbor}
Let $D$ be a nontrivial $(i,j)$-step competitive digraph and $u$ be a vertex in $D$.
Then the digraph $F$ obtained by adding a vertex $v$ and the arc set \[\{(v,w) \mid \text{$w$ is an out-neighbor of $u$ in $D$}\}\] to $D$ is $(i,j)$-step competitive.
\end{Lem}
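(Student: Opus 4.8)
The plan is to verify directly that every pair of vertices of $F$ $(i,j)$-step competes, splitting into cases according to whether the pair involves the new vertex $v$. Two structural facts about $v$ drive the argument. First, by construction $N^+_F(v)=N^+_D(u)$, so $v$ and $u$ have exactly the same out-neighbors in $F$. Second, $v$ has in-degree $0$ in $F$, so no directed walk starting at a vertex other than $v$ can ever traverse $v$ (this also makes the construction legitimate: since $v$ receives no arcs, $F$ has no loops, directed $2$-cycles, or multiple arcs). I also record that $F$ contains $D$ as a subdigraph and that, for any old vertex $z\in V(D)$, the digraph $D-z$ is a subdigraph of $F-z$.

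For a pair $x,y\in V(D)$, I would argue that their competition in $D$ carries over to $F$. Since $D$ is $(i,j)$-step competitive, $x$ and $y$ have an $(i,j)$-step common out-neighbor $w\in V(D)$ in $D$; that is, there is a directed $(x,w)$-walk in $D-y$ and a directed $(y,w)$-walk in $D-x$ whose lengths are bounded by $i$ and $j$ in one of the two admissible orders. Because $D-y$ and $D-x$ are subdigraphs of $F-y$ and $F-x$ respectively, these walks persist in $F$ with the same lengths, and $w$ remains distinct from $x$ and $y$. Hence $x$ and $y$ $(i,j)$-step compete in $F$.

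The substantive case is a pair $\{v,x\}$ with $x\in V(D)$. If $x=u$, I would use Lemma~\ref{lem:i,j-step-outdegree} to pick an out-neighbor $w$ of $u$; then $w\in N^+_F(v)$ as well, and the length-one walks $u\to w$ and $v\to w$ avoid $v$ and $u$ respectively (with $w\notin\{u,v\}$), so $w$ witnesses that $u$ and $v$ $(i,j)$-step compete. If $x\neq u$, I would transport the competition that $u$ already enjoys with $x$ in $D$: let $w$ be an $(i,j)$-step common out-neighbor of $u$ and $x$ in $D$, realized by a walk $u=p_0\to p_1\to\cdots\to p_k=w$ in $D-x$ together with a walk from $x$ to $w$ in $D-u$. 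Replacing the initial vertex $u$ by $v$ gives $v\to p_1\to\cdots\to p_k=w$, a walk in $F$ of the same length $k$ since $p_1\in N^+_D(u)=N^+_F(v)$; as the original walk avoided $x$ and $v\neq x$, this new walk lies in $F-x$. Meanwhile the $(x,w)$-walk lies in $F-v$ automatically, because $v$ has in-degree $0$. Matching the lengths to the same ordering as before shows $w$ is an $(i,j)$-step common out-neighbor of $v$ and $x$ in $F$.

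The one point requiring care is this transport step: one must check that replacing the first vertex of the $u$-walk by $v$ changes neither its length nor the set of vertices it visits apart from the endpoint $u$, and hence does not introduce $x$; and that the $(x,w)$-walk cannot inadvertently pass through $v$, which is guaranteed cleanly by the in-degree-$0$ property. The only bookkeeping is to preserve which of the two bounds ($\le i$ or $\le j$) each walk satisfies so that the pair $\{v,x\}$ inherits the exact ordering used by $\{u,x\}$; everything else follows from the two structural facts about $v$.
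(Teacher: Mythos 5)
Your proof is correct and follows essentially the same route as the paper's: the same case split (two old vertices; $v$ paired with $u$; $v$ paired with $x\neq u$), with the same witnesses — a common out-neighbor of $u$ and $x$ transported to $v$ via $N^+_F(v)=N^+_D(u)$, and a direct out-neighbor of $u$ for the pair $\{u,v\}$. You merely spell out the walk-replacement and in-degree-$0$ details that the paper leaves implicit in the phrase ``by the construction of $F$.''
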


\begin{proof}
Obviously, $D$ is a subdigraph of $F$.
To show that $F$ is $(i,j)$-step competitive,
take two vertices $x$ and $y$ in $F$.
If none of $x$ and $y$ equals $v$, then $x$ and $y$ $(i,j)$-step compete in $D$ and so they do the same in $F$.
Now suppose that one of $x$ and $y$ is $v$.
Without loss of generality, we may assume $x=v$.
Suppose $y \neq u$.
Then, since $D$ is $(i,j)$-step competitive,
$u$ and $y$ have an $(i,j)$-step common out-neighbor $z$ in $D$.
By the construction of $F$,
$z$ is an $(i,j$)-step common out-neighbor of $v$ and $y$.
Now we suppose $y=u$.
Then, by Lemma~\ref{lem:i,j-step-outdegree}, $u$ has an out-neighbor $z'$ in $D$.
Thus $z'$ is a common out-neighbor of $u$ and $v$ in $F$ and so, by Proposition~\ref{prop:i,j-step_inclusion}(1), $u$ and $v$ $(i,j)$-step compete.
Therefore
we have shown that $F$ is $(i,j)$-step competitive.\end{proof}

\begin{Thm}\label{thm:supergraph} Let $G$ be an $(i,j)$-step competitively orientable graph and $G'$ be a supergraph of $G$ such that for each vertex $v$ in $G'$, there exists a vertex $u$ in $G$ satisfying $N_G(u)\subseteq N_{G'} (v)$. Then $G'$ is  $(i,j)$-step competitively orientable.	
\end{Thm}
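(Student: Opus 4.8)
The plan is to fix an $(i,j)$-step competitive orientation $D$ of $G$ and enlarge it, in two stages, to an $(i,j)$-step competitive orientation $D'$ of $G'$: first I grow the vertex set up to $V(G')$ by repeatedly invoking Lemma~\ref{lem:i,j-step-same-outneighbor}, and then I orient whichever edges of $G'$ are still unoriented. (We may assume $G$ is nontrivial, since a one-vertex graph admits no useful orientation here; in fact Lemma~\ref{lem:i,j-step-outdegree} forces at least three vertices.) Two facts carry the argument. The first is Lemma~\ref{lem:i,j-step-same-outneighbor} itself, which attaches a new vertex copying the out-neighborhood of an existing vertex while preserving $(i,j)$-step competitiveness. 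The second is a monotonicity observation that I would isolate as a preliminary claim: if $H_1$ and $H_2$ are digraphs on a common vertex set with $A(H_1)\subseteq A(H_2)$ and $H_1$ is $(i,j)$-step competitive, then so is $H_2$. Indeed, if $w$ is an $(i,j)$-step common out-neighbor of $x$ and $y$ in $H_1$, the two witnessing walks (one from $x$ avoiding $y$, one from $y$ avoiding $x$) use only arcs of $H_1$, hence persist in $H_2$ and still avoid the prescribed vertex; so $w$ witnesses competition of $x$ and $y$ in $H_2$ as well.

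For the first stage, I would process the vertices of $V(G')\setminus V(G)$ one at a time. Given such a vertex $v$, the hypothesis furnishes a vertex $u_v\in V(G)$ with $N_G(u_v)\subseteq N_{G'}(v)$, and I apply Lemma~\ref{lem:i,j-step-same-outneighbor} to attach $v$ joined by an arc to each out-neighbor of $u_v$. Since every newly attached vertex receives only out-arcs, and these point into $V(G)$, attaching one new vertex never disturbs the out-neighborhood of any $u_w\in V(G)$ that is to be copied at a later step; hence the attachments may be carried out in any order, each a bona fide application of the lemma. The outcome is a digraph $F$ with $V(F)=V(G')$ that contains $D$ as a subdigraph and is $(i,j)$-step competitive.

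Next I would verify that the underlying graph of $F$ sits inside $G'$. On $V(G)$ the arcs of $F$ are exactly those of $D$, whose underlying graph is $G\subseteq G'$; and each new vertex $v$ is joined in $F$ only to out-neighbors of $u_v$, a set contained in $N_G(u_v)\subseteq N_{G'}(v)$, so every edge of $F$ incident to $v$ is an edge of $G'$. Thus the underlying graph of $F$ is a subgraph of $G'$, and in particular no edge of $G'$ missing from $F$ has an arc of $F$ between its endpoints. I then orient each such missing edge of $G'$ arbitrarily---say, from smaller to larger endpoint under a fixed linear order of $V(G')$---and adjoin these arcs to $F$, obtaining $D'$. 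Because each such edge had no arc between its endpoints in $F$, no directed $2$-cycle is created and $D'$ is a genuine orientation, whose underlying graph is by construction exactly $G'$. Finally $A(F)\subseteq A(D')$ on the common vertex set $V(G')$, so the monotonicity claim gives that $D'$ is $(i,j)$-step competitive, proving $G'$ is $(i,j)$-step competitively orientable.

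The substance is carried entirely by Lemma~\ref{lem:i,j-step-same-outneighbor} together with the monotonicity of $(i,j)$-step competition under arc addition; what remains is bookkeeping, and that is the step I would be most careful about. The two points to nail down are that the iterated application of Lemma~\ref{lem:i,j-step-same-outneighbor} is legitimate (attaching new vertices must not alter any out-neighborhood copied later) and that the underlying graph of $F$ genuinely lands inside $G'$, so that completing it to all of $G'$ cannot force a conflicting arc. Neither is a genuine obstacle, but both must be stated precisely for the construction to be sound.
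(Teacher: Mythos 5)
Your proof is correct and follows essentially the same route as the paper's: grow $D$ by absorbing the vertices of $V(G')\setminus V(G)$ one at a time via Lemma~\ref{lem:i,j-step-same-outneighbor}, and then orient the remaining edges of $G'$ arbitrarily, justified by the monotonicity of $(i,j)$-step competition under arc addition (which the paper uses implicitly and you state and prove explicitly). The one organizational difference works in your favor: the paper orients \emph{all} edges of the induced subgraph $G_l$ at each step $l$, so its constraint $N^+_{D_{l-1}}(u_l)\subseteq N^+_{D_l}(v_l)$ could in principle become unsatisfiable if an earlier ``arbitrary'' choice gave $u_l$ an out-neighbor $v_m$ with $v_lv_m\notin E(G')$, whereas your ordering---first attach every new vertex with out-arcs only, all pointing into $V(G)$, and only afterwards orient the leftover edges---guarantees that each copied out-neighborhood is still $N^+_D(u_v)$, so every application of the lemma is legitimate.
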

\begin{proof}
	Let $D$ be an $(i,j)$-step competitive orientation of $G$.
If $V(G')= V(G)$, then each orientation $D'$ of $G'$ obtained by orienting edges in $E(G')- E(G)$ arbitrarily so that $A(D) \subseteq A(D')$ is $(i,j)$-step competitive.
Suppose $V(G') \neq V(G)$ and let $V(G')- V(G)=\{v_1,\ldots,v_k\}$ for a positive integer $k$. By the hypothesis, there exists a vertex $u_l$ in $G$ such that $N_G(u_l)\subseteq N_{G'}(v_l)$ for each $1\leq l \leq k $.
Let $G_0=G$, $D_0=D$, and $G_l=G'[V(G) \cup \{v_1,\ldots,v_l\}]$ for each $1\leq l \leq k $.
Then the orientation $D_l$ of $G_l$ obtained by orienting edges in $E(G_l)- E(G_{l-1})$ arbitrarily as long as $A(D_{l-1}) \subseteq A(D_{l})$ and $N^+_{D_{l-1}}(u_{l}) \subseteq N^+ _{D_{l}}(v_{l})$ is $(i,j)$-step competitive for each $1\leq l \leq k $ by Lemma~\ref{lem:i,j-step-same-outneighbor}.
Therefore $D_k$ is an $(i,j)$-step competitive orientation of $G'$.
\end{proof}

Let $G$ be a connected graph.
The {\it diameter} of $G$ is the maximum distance between the pairs of the vertices. 
A {\it disconnecting set} of edges is a set $F \subseteq E(G)$ such that $G-F$ has more than one component.
A graph is {\it $k$-edge connected} if every disconnecting set has at least $k$ edges.

\begin{Thm}\label{lem:(i,j)-char}
Let $G$ be a nontrivial $(i,j)$-step competitively orientable graph.
Then the following are true:
	\begin{enumerate}[(1)]
	\item any vertex in $G$ has degree at least $2$;
 	\item $|V(G)|\geq 5$ and $|E(G)|\geq 2|V(G)|$;
	\item if $G$ has a vertex $u$ of degree $2$, then $G-u$ is $(i,j)$-step competitively orientable; 
	\item for any two vertices $u$ and $v$,
	there exists a $uv$-walk of length at most $i+j$ on which $u$ and $v$ are not consecutive;
	\item the diameter of $G$ is at most $i+j$;
	\item $G$ is $2$-edge connected. 
\end{enumerate}
\end{Thm}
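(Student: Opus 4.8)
The plan is to fix an $(i,j)$-step competitive orientation $D$ of $G$ once and for all and read each assertion off the structure of $D$, with Lemma~\ref{lem:i,j-step-outdegree} as the main engine. For (1) and (2), Lemma~\ref{lem:i,j-step-outdegree} guarantees that every vertex of $D$ has out-degree at least $2$; since each out-arc yields an incident edge of $G$, every vertex of $G$ has degree at least $2$, giving (1). Summing out-degrees gives $|E(G)|=|A(D)|=\sum_{x}d^+_D(x)\ge 2|V(G)|$, which is the second half of (2). For $|V(G)|\ge 5$, I would combine this with the fact that $D$ has no directed $2$-cycles, so $|A(D)|\le\binom{|V(G)|}{2}$; writing $n=|V(G)|$, the inequality $2n\le\binom{n}{2}$ forces $n\ge 5$.

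For (3), the key observation is that a degree-$2$ vertex $u$ must be a source in $D$: by Lemma~\ref{lem:i,j-step-outdegree} it has out-degree at least $2$, and degree exactly $2$ then forces out-degree exactly $2$ and in-degree $0$. Since no directed walk can pass through a source except by starting there, and since an $(i,j)$-step common out-neighbor can never be $u$ itself, the walks witnessing the competition of any two vertices $x,y\neq u$ in $D$ avoid $u$ entirely; hence they still witness it in $D-u$. Thus $D-u$ is an $(i,j)$-step competitive orientation of $G-u$.

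The crux is (4), from which (5) and (6) follow quickly. Given distinct $u,v$, their $(i,j)$-step competition yields a common out-neighbor $w$ together with a shortest directed $(u,w)$-walk $P$ in $D-v$ of length $p\le i$ and a shortest directed $(v,w)$-walk $Q$ in $D-u$ of length $q\le j$; being shortest, $P$ and $Q$ are directed paths, $P$ avoids $v$, and $Q$ avoids $u$. Concatenating $P$ with the reverse of $Q$ produces a $uv$-walk in $G$ of length $p+q\le i+j$. The point I must check carefully is that $u$ and $v$ are not consecutive on it: because $P$ is a path avoiding $v$ and $Q$ is a path avoiding $u$, the vertex $u$ occurs only at the initial position and $v$ only at the terminal position, and since $p\ge 1$ and $q\ge 1$ (the distances are at least $1$, so $w\neq u$ and $w\neq v$) these two positions are separated by at least the vertex $w$ sitting strictly between them. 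This bookkeeping—ensuring no stray repetition of $u$ or $v$ collapses the walk—is the main obstacle, and it is exactly where shortest-walk (hence path) selection is needed.

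Finally, (5) is immediate: (4) supplies, for every pair, a walk of length at most $i+j$, so $d_G(u,v)\le i+j$ and the diameter is at most $i+j$; in particular $G$ is connected. For (6), suppose toward a contradiction that $G$ had a bridge $uv$. Every $uv$-walk would then have to traverse this unique edge joining the two sides of $G-uv$, forcing $u$ and $v$ to appear consecutively somewhere on it—contradicting the walk produced in (4). Hence $G$ has no bridge and, being connected, is $2$-edge connected.
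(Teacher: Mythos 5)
Your proposal is correct and follows essentially the same route as the paper: Lemma~\ref{lem:i,j-step-outdegree} drives (1)--(3), the concatenation $P\,Q^{-1}$ of the two witnessing paths (each of length at least $1$, so $w\neq u,v$ and the endpoints cannot be consecutive) gives (4), and (5)--(6) follow from (4) exactly as in the paper, with your bridge-contradiction phrasing of (6) being just the contrapositive of the paper's direct argument. Your write-up in fact makes explicit two details the paper leaves implicit (that a source cannot lie on walks starting elsewhere in (3), and the positional bookkeeping in (4)), which is fine.
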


\begin{proof}
Let $D$ be an $(i,j)$-step competitive orientation of $G$.
By Lemma~\ref{lem:i,j-step-outdegree},
each vertex in $D$ has outdegree at least $2$ and so (1) is true.
Thus
$|E(G)|\geq 2|V(G)|$.
Then, since
\[ |E(G)| \leq \frac{|V(G)|(|V(G)|-1)}{2}, \]
$|V(G)|\geq 5$.
Hence (2) is true.

To show (3),
suppose that $G$ has a vertex $u$ of degree $2$.
Then, by Lemma~\ref{lem:i,j-step-outdegree}, 
the two neighbors of $u$ must be out-neighbors in $D$
and so
$u$ has indegree $0$ in $D$.
Thus
the digraph obtained by deleting $u$ from $D$ is an $(i,j)$-step competitive orientation of $G-u$ and so (3) is true.

To show (4),
take two vertices $u$ and $v$ in $G$.
Then, since $D$ is $(i,j)$-step competitive,
there exists an $(i,j)$-step common out-neighbor $w$ of $u$ and $v$.
Without loss of generality, we may assume that
there exist a directed $(u,w)$-path $P_1$ of length at most $i$ not traversing $v$ and a directed $(v,w)$-path $P_2$ of length at most $j$ not traversing $u$ in $D$.
We note that $P_1P_2^{-1}$ is a directed $(u,v)$-walk of length at most $i+j$ not traversing neither $(u,v)$ nor $(v,u)$ where $P_2^{-1}$ is the directed $(w,v)$-path obtained by reversing the sequences of $P_2$.
In addition, $P_1P_2^{-1}$ results in $uv$-walk of length at most $i+j$ on which $u$ and $v$ are not consecutive in $G$.
Since $u$ and $v$ were arbitrarily chosen from $V(G)$, (4) is true.

We can check that (5) is an immediate consequence of (4).

To show (6),
take an edge $uv$ in $G$.
Then, by (4), 
there exists a $uv$-walk of length at most $i+j$ on which $u$ and $v$ are not consecutive in $G$.
In addition, $G$ is connected.
Thus $G-uv$ is connected and so (6) is true.
\end{proof}

\begin{Rmk} \label{rmk:(i,j)step-edges-vertices}
We note that each of the inequalities 
 in (2) of Theorem~\ref{lem:(i,j)-char} is sharp for any positive integers $i$ and $j$ with $i+j\geq 3$.
For, the orientation $D_{10}$ of $K_5$ given in  Figure~\ref{fig:tournaments-complete}  is $(i,j)$-step competitive with $i+j\geq 3$ by Proposition~\ref{prop:tournament}.
Thus the inequalities 
 in (2) of Theorem~\ref{lem:(i,j)-char} are sharp. 
\end{Rmk}

\section{The complete multipartite graphs that are $(i,j)$-step competitively orientable}\label{sec:multi}
In this section,
we completely characterize the $(i,j)$-step competitively orientable complete multipartite graphs in terms of the sizes of their partite sets with $i+j\geq 3$.
Choi \emph{et al.}~\cite{choi2022competitively} completely characterize $(1,1)$-step competitively orientable complete multipartite graphs in terms of the sizes of their partite sets.
Hereby, we only consider $(i,j)$-step competitively orientable complete multipartite graphs when $(i,j)\neq(1,1)$.

Choi \emph{et al.}~\cite{choi20171} did some research on a $(1,2)$-step competitive orientation of a complete bipartite graph and 
the following are some results (the text has been altered in places to fit the terminology of this paper).
\begin{Thm}[\cite{choi20171}]\label{thm:distinct-(1,2)-partite}
Let $u$ and $v$ be vertices belonging to distinct partite sets of an orientation $D$ of a complete bipartite graph.
Then $u$ and $v$ $(1, 2)$-step compete in
$D$ if and only if $u$ (resp. $v$) has an out-neighbor different from $v$ (resp. $u$).
\end{Thm}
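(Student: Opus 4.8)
The plan is to prove both directions directly from the definition of $(1,2)$-step competition, using the bipartite structure only to control where out-neighbors can lie. First I would fix notation: let $X$ and $Y$ be the two partite sets and assume without loss of generality that $u \in X$ and $v \in Y$. The single structural fact I will lean on is that every out-neighbor of $u$ lies in $Y$ and every out-neighbor of $v$ lies in $X$, since in an orientation of a complete bipartite graph every arc joins the two parts.

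For the \emph{only if} direction, suppose $u$ and $v$ $(1,2)$-step compete via a common out-neighbor $w$. Because the desired conclusion is symmetric in $u$ and $v$, and conditions (i) and (ii) of the definition are interchanged by swapping the roles of $u$ and $v$, I may assume $d_{D-v}(u,w)=1$ and $1\le d_{D-u}(v,w)\le 2$. The equality $d_{D-v}(u,w)=1$ gives an arc $u\to w$ with $w\neq v$, so $u$ has an out-neighbor different from $v$. From $1\le d_{D-u}(v,w)\le 2$, either $v\to w$ directly, giving an out-neighbor $w\neq u$ of $v$, or there is an intermediate vertex $x\neq u$ with $v\to x\to w$, giving an out-neighbor $x\neq u$ of $v$; in both subcases $v$ has an out-neighbor different from $u$.

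For the \emph{if} direction, let $a$ be an out-neighbor of $u$ with $a\neq v$ and $b$ an out-neighbor of $v$ with $b\neq u$. By the structural fact, $a\in Y$ and $b\in X$, hence $a\neq b$ and $ab$ is an edge of the underlying graph, so exactly one of $a\to b$ or $b\to a$ holds in $D$. If $b\to a$, I would take $w=a$: then $u\to a$ is a $(u,a)$-walk of length $1$ avoiding $v$, and $v\to b\to a$ is a $(v,a)$-walk of length $2$ avoiding $u$ (here $b\neq u$ and $a\in Y$ forces $a\neq u$), so condition (i) holds. If $a\to b$, I would take $w=b$ symmetrically, using $u\to a\to b$ and $v\to b$, so condition (ii) holds. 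Either way $u$ and $v$ $(1,2)$-step compete.

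The only point requiring care is the bookkeeping of the \emph{not traversing} clauses, namely checking that neither excluded endpoint appears on the exhibited walks; these all reduce to the membership facts $a\in Y$ and $b\in X$ together with $a\neq v$ and $b\neq u$. I expect the main conceptual step, rather than a genuine obstacle, to be the observation that $a$ and $b$ necessarily lie in opposite parts and are therefore adjacent, which is exactly what lets a single length-$2$ walk through $a$ or $b$ serve as the second walk for both vertices.
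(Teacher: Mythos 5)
Your proof is correct, and all the delicate points (the \emph{not traversing} clauses, the fact that $a$ and $b$ lie in opposite parts and are therefore distinct and adjacent, and the symmetry justifying the WLOG reduction in the forward direction) are handled properly. Note, however, that this paper does not prove the statement at all: it is quoted as a known result from the reference \cite{choi20171}, so there is no in-paper proof to compare against; your argument is the natural one and is essentially the argument given in that cited source.
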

\begin{Thm}[\cite{choi20171}]\label{thm:competition_realizable}
 Let $n_1$ and $n_2$ be positive integers with $n_1 \geq n_2$.
 Then there exists an orientation of the complete bipartite graph $K_{n_1,n_2}$ such that any two vertices in a partite set $(1,1)$-step compete if and only if one of the following holds: (a) $n_2 =1$; (b) $n_2 \geq6$; (c) $n_1\geq10$ and $n_2=5$.
 \end{Thm}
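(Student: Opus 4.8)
The plan is to recast the existence of the desired orientation as a statement about set systems and then analyze that statement case by case. Fix partite sets $X$ and $Y$ with $|X|=n_1\geq n_2=|Y|$. An orientation of $K_{n_1,n_2}$ is encoded by recording, for each $x\in X$, the set $S_x\subseteq Y$ of out-neighbors of $x$ (so the edge $xy$ is oriented from $x$ to $y$ exactly when $y\in S_x$). Two vertices $x,x'\in X$ have a common out-neighbor iff $S_x\cap S_{x'}\neq\emptyset$, and two vertices $y,y'\in Y$ have a common out-neighbor $x$ iff $\{y,y'\}\cap S_x=\emptyset$. Hence the orientation we want exists iff there is a family $\{S_x\}_{x\in X}$ of subsets of $Y$ with (I) $S_x\cap S_{x'}\neq\emptyset$ for all $x\neq x'$ and (II) for every $2$-subset $\{y,y'\}$ of $Y$ some $S_x$ satisfies $S_x\cap\{y,y'\}=\emptyset$. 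Condition (I) pushes the sets to share structure while (II) forbids them from all sharing a point; the theorem records where this tension can be resolved.

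For necessity I would dispatch the excluded cases directly from (I) and (II). If $n_2=2$, applying (II) to $Y$ itself forces some $S_x=\emptyset$, which intersects nothing and violates (I) since $n_1\geq 2$. If $n_2=3$, (II) forces the three singletons into the family, and two distinct singletons are disjoint. If $n_2=4$, the avoider of a pair and the avoider of its complementary pair are disjoint. For $n_2=5$ I would first show every member has size at least $3$: a member of size $\leq 2$ is disjoint from the avoider that (II) supplies for a pair it contains (or, for a singleton $\{a\}$, for any pair $\{a,c\}$), contradicting (I). Since each avoider then lies in a $3$-element complement and has size $\geq 3$, it must equal that complement; the ten pairs of $Y$ yield ten distinct complementary $3$-sets, so the family needs at least ten members and $n_1\geq 10$. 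These four observations rule out exactly the configurations outside (a)--(c).

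For sufficiency I would exhibit families meeting (I) and (II). The case $n_2=1$ is immediate: orient every edge out of $X$. For $n_2=5$ and $n_1\geq 10$, assign the ten $3$-subsets of $Y$ to ten vertices of $X$ and repeat arbitrary $3$-subsets on any remaining vertices; any two $3$-subsets of a $5$-set meet (giving (I)) and each pair is avoided by its complementary $3$-set (giving (II)). For $n_2\geq 6$ it suffices, by duplicating out-neighbor sets to enlarge $X$ without disturbing (I) or (II), to treat $n_1=n_2=:n$, where I would use a circulant family: identify $X=Y=\mathbb{Z}_n$ and set $S_x=x+A$ for a fixed $A\subseteq\mathbb{Z}_n$. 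Writing $A-A=\{a-a':a,a'\in A\}$ and calling $A$ a \emph{difference cover} when $A-A=\mathbb{Z}_n$, a short translation shows (I) holds iff $A$ is a difference cover and (II) holds iff $A^c$ is a difference cover. Thus the task reduces to finding $A$ with both $A$ and $A^c$ difference covers; for instance $A=\{0,1,3\}$ works for $n\in\{6,7\}$ (both $\{0,1,3\}$ and its complement realize every nonzero difference), and an $A$ of size about $n/2$ works for larger $n$.

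The main obstacle is precisely this last construction for $n_2\geq 6$ at the extremal size $n_1=n_2$. There one must simultaneously guarantee that $A$ and $A^c$ are \emph{both} difference covers of $\mathbb{Z}_n$: condition (II) wants $A^c$ (equivalently the complements $Y\setminus S_x$, the ``blocks'') rich enough to cover every pair, while condition (I) wants the $S_x=x+A$ large enough to pairwise meet, i.e. $A$ itself rich. Since a difference cover needs size roughly $\sqrt{n}$ and the two sizes sum to $n$, the two demands are jointly satisfiable only once $n$ is large enough, which is why $n_2\leq 4$ fails outright and $n_2=5$ is borderline (forcing the full $3$-subset family and hence $n_1\geq 10$). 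Verifying that a suitable $A$ exists for every $n_2\geq 6$ — explicitly for the first few values and by a size-$\lfloor n/2\rfloor$ choice for the rest — is the technical heart of the argument.
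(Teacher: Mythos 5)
Two framing remarks first: the paper does not prove this statement at all --- it is imported verbatim from \cite{choi20171} --- so your attempt can only be judged on correctness, not compared to an in-paper argument. Judged that way, most of what you wrote is sound. The translation into the set-system conditions (I) and (II) is the right reformulation; the necessity direction is complete and correct (the cases $n_2\in\{2,3,4\}$, the ``every member has size at least $3$'' step for $n_2=5$, and the count of the ten complementary $3$-sets forcing $n_1\geq 10$ all check out); and the sufficiency constructions for $n_2=1$, for $n_2=5$ with $n_1\geq 10$, and for $n_1=n_2\in\{6,7\}$ via $A=\{0,1,3\}$, together with the duplication argument reducing $n_1>n_2$ to the square case, are all valid.

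The genuine gap is the case $n:=n_1=n_2\geq 8$, which you leave as the assertion ``an $A$ of size about $n/2$ works for larger $n$.'' This is the bulk of case (b) of the theorem, it is not proved, and the hint as phrased is misleading because size alone does not suffice. The most natural size-$\lfloor n/2\rfloor$ candidate, an interval, provably fails: an interval of length $k$ in $\mathbb{Z}_n$ has difference set $\{-(k-1),\ldots,k-1\}$, hence is a difference cover iff $2k-1\geq n$, and this cannot hold simultaneously for an interval and its complementary interval (summing gives $2n-2\geq 2n$). Likewise any $A$ whose complement lies in a single parity class fails. So an explicit construction with verification is required, and it is the one step your outline never supplies. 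A repair that stays inside your circulant framework: put $m=\lceil n/2\rceil$ and $A=\{0,1,\ldots,m-2\}\cup\{m\}$, a ``notched'' interval (your $\{0,1,3\}$ is exactly the case $n\in\{6,7\}$). Then $A-A\supseteq\{-m,\ldots,m\}$, since the interval part gives $0,\ldots,m-2$, the element $m$ against the interval gives $2,\ldots,m$, and $m-1$ is the difference of the members $m$ and $1$; as $2m+1>n$, this covers $\mathbb{Z}_n$. Also $A^c=\{m-1\}\cup\{m+1,\ldots,n-1\}$ satisfies $A^c-A^c\supseteq\{-(n-m),\ldots,n-m\}$, since differences inside the interval give $0,\ldots,n-m-2$ and differences against the isolated point $m-1$ give $2,\ldots,n-m$; as $2(n-m)+1\geq n$, this covers $\mathbb{Z}_n$ as well, for every $n\geq 6$. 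With this (or any comparable explicit family) inserted, your proof is complete; without it, the case $n_2\geq 8$ remains unproven.
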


The following is an immediate consequence of Theorem~\ref{thm:supergraph}.
\begin{Cor} \label{cor:making-(i,j)-step-multi-complete}
Let $k$ be a positive integer with $k \geq 2$; $n_1,\ldots$, and $n_k$ be positive integers such that $n_1 \geq \cdots \geq n_k$;  $n'_1, \ldots,$ and $n'_k$ be positive integers such that
$n'_1 \geq \cdots \geq n'_k$, $n'_1 \geq n_1$,
$n'_2 \geq n_2,\ldots$, and $n'_k \geq n_k$.
If the complete $k$-partite graph $K_{n_1,\ldots,n_k}$ is $(i,j)$-step competitively orientable, 
then the complete $k$-partite graph $K_{n'_1,\ldots,n'_k}$ is $(i,j)$-step competitively orientable.
\end{Cor}

\begin{Lem}\label{lem:two-in-neighbors}
Let $D$ be a digraph.
Any $(1,j)$-step common out-neighbor of two vertices in $D$ has at least two in-neighbors in $D$ for some positive integer $j$.
\end{Lem}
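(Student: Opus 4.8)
The plan is to unwind the definition of a $(1,j)$-step common out-neighbor (so that $i=1$) and directly exhibit two distinct in-neighbors of such a vertex $w$. Fix a positive integer $j$ and let $w$ be a $(1,j)$-step common out-neighbor of two vertices $u$ and $v$ in $D$. By the definition of $(i,j)$-step competition with $i=1$, after possibly interchanging the roles of $u$ and $v$ we may assume
\[ 1 \leq d_{D-v}(u,w) \leq 1 \quad\text{and}\quad 1 \leq d_{D-u}(v,w) \leq j. \]
The first relation forces $d_{D-v}(u,w)=1$, so $(u,w) \in A(D)$; hence $u$ is an in-neighbor of $w$.

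To produce a second in-neighbor, I would take a shortest directed $(v,w)$-path $P$ in $D-u$, which exists because $1 \leq d_{D-u}(v,w) \leq j$. Since $P$ has length at least $1$, it contains a vertex $x$ immediately preceding $w$, and $(x,w) \in A(D)$, so $x$ is also an in-neighbor of $w$.

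The final step is to verify that these two in-neighbors are distinct. As $P$ is a path in $D-u$, it does not pass through the vertex $u$; in particular $x \neq u$. Consequently $u$ and $x$ are two distinct in-neighbors of $w$ in $D$, which is exactly what we want. The only delicate point is precisely this distinctness, and it is secured automatically by the requirement that the directed $(v,w)$-walk avoid $u$; for this reason I do not anticipate any genuine obstacle in carrying out the argument.
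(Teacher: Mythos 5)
Your proof is correct and follows essentially the same route as the paper's: both arguments reduce to the case where $u$ is a direct in-neighbor of $w$ (by the symmetry of $u$ and $v$ in the definition), take the vertex $x$ immediately preceding $w$ on a directed $(v,w)$-path avoiding $u$, and conclude $x \neq u$ precisely because that path does not traverse $u$. Nothing is missing; the distinctness point you flag as delicate is handled exactly as in the paper.
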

\begin{proof}
Suppose that a vertex $w$ is a $(1,j)$-step common out-neighbor of two vertices $u$ and $v$ in $D$. 
Since $u$ and $v$ are indistinguishable, according to the definition, we may assume that $w$ is an out-neighbor of $u$ and there exists a directed $(v,w)$-path $P_1$ of length at most $j$ not traversing $u$.
Then the vertex $x$ immediately preceding $w$ on $P_1$ and $u$ are in-neighbors of $w$.
Since $P_1$ does not traverse $u$, $x$ and $u$ are distinct and so the lemma statement follows.
\end{proof}

From now on, we write $u \to v$ to represent ``$(u,v)$ is an arc of a digraph".
  
Now we are ready to prove Theorems~\ref{thm:(1,2)-step-bipartite}, \ref{thm:tripartite-(i,j)}, and \ref{thm:(i,j)-step-complete}.

\begin{proof}[Proof of Theorem~\ref{thm:(1,2)-step-bipartite}]
We denote the partite sets of the complete bipartite graph $K_{n_1,n_2}$ by $U$ and $V$ such that 
$|U|=n_1$ and $|V|=n_2$.

{\it Case $1$}. $i+j=3$.
Without loss of generality, we may assume $i=1$ and $j=2$.
To show the ``only if" part of (a),
suppose that
there exists a $(1,2)$-step competitive
orientation $D$ of $K_{n_1,n_2}$.
Then any two vertices $u$ and $v$
$(1,1)$-step compete in $D$ if and only if $u$ and $v$ belong to the same partite set in $D$.
Thus, by Theorem~\ref{thm:competition_realizable},
 $n_2=1$, or $n_2 \geq6$, or $n_1\geq10$ and $n_2=5$.
 If $n_2=1$, then any vertex in $U$ cannot have outdegree at least $2$, which contradicts Lemma~\ref{lem:i,j-step-outdegree}.
 Thus (i) $n_2 \geq6$ or (ii) $n_1\geq10$ and $n_2=5$.
  Thus the ``only if" part is true.

Now we prove the ``if" part of (a).
By
Corollary~\ref{cor:making-(i,j)-step-multi-complete}, it is sufficient to show that  $K_{6,6}$ and $K_{10,5}$ are $(1,2)$-step competitively orientable.
We first obtain a $(1,2)$-step competitive orientation of $K_{6,6}$.
We consider the digraph $D_1$ defined by
\[V(D_1)=V(D^{\rightarrow})=V(D^{\leftarrow} )\quad \text{and} \quad A(D_1)=A(D^{\rightarrow}) \cup A(D^{\leftarrow}).\]
where $D^{\rightarrow}$ and $D^{\leftarrow}$ are the digraphs given in Figure~\ref{fig:bipartite-tournaments-complete}.
Then $D_1$ is an orientation of $K_{6,6}$ with the partite sets $\{x_1,\ldots,x_6\}$ and $\{x_7,\ldots,x_{12}\}$.
We can easily check that any two vertices in a partite set $(1,1)$-step compete and any vertex in $D_1$ has outdegree at least $2$.
Then, by Theorem~\ref{thm:distinct-(1,2)-partite},
any two vertices $u$ and $v$ belonging to distinct partite sets $(1,2)$-step compete
and so $D_1$ is $(1,2)$-step competitive.

\begin{figure}
\begin{center}
\begin{tikzpicture}[x=1.0cm, y=1.0cm]
  \vertex (x1) at (0,0) [label=left:$x_1$]{};
   \vertex (x2) at (0,1.5) [label=left:$x_2$]{};
   \vertex (x3) at (0,3) [label=left:$x_3$]{};
  \vertex (x4) at (0,4.5) [label=left:$x_4$]{};
   \vertex (x5) at (0,6) [label=left:$x_5$]{};
   \vertex (x6) at (0,7.5) [label=left:$x_6$]{};
   \vertex (x7) at (1.5,0) [label=right:$x_7$]{};
   \vertex (x8) at (1.5,1.5) [label=right:$x_8$]{};
   \vertex (x9) at (1.5,3) [label=right:$x_9$]{};
   \vertex (x10) at (1.5,4.5) [label=right:$x_{10}$]{};
   \vertex (x11) at (1.5,6) [label=right:$x_{11}$]{};
   \vertex (x12) at (1.5,7.5) [label=right:$x_{12}$]{};
  \path[->,thick,shorten >=0.5mm]
  (x6) edge (x12)
   (x6) edge (x11)
   (x6) edge (x10)
   
   (x5) edge (x9)
   (x5) edge (x11)
   (x5) edge (x10)

   (x4) edge [out=-45,in=105](x7)
   (x4) edge (x11)
   (x4) edge [out=-30,in=105] (x8)

   (x3) edge (x10)
   (x3) edge (x8)
   (x3) edge (x7)   

   (x2) edge [out=70,in=-135] (x12)
   (x2) edge (x9)
   (x2) edge (x8)     

   (x1) edge (x12)
   (x1) edge (x9)
   (x1) edge (x7)   
	;
\draw (0.75, -1) node{$D^{\rightarrow}$};
\end{tikzpicture}
\qquad \qquad 
\begin{tikzpicture}[x=1.0cm, y=1.0cm]
 \vertex (x1) at (0,0) [label=left:$x_1$]{};
   \vertex (x2) at (0,1.5) [label=left:$x_2$]{};
   \vertex (x3) at (0,3) [label=left:$x_3$]{};
  \vertex (x4) at (0,4.5) [label=left:$x_4$]{};
   \vertex (x5) at (0,6) [label=left:$x_5$]{};
   \vertex (x6) at (0,7.5) [label=left:$x_6$]{};
   \vertex (x7) at (1.5,0) [label=right:$x_7$]{};
   \vertex (x8) at (1.5,1.5) [label=right:$x_8$]{};
   \vertex (x9) at (1.5,3) [label=right:$x_9$]{};
   \vertex (x10) at (1.5,4.5) [label=right:$x_{10}$]{};
   \vertex (x11) at (1.5,6) [label=right:$x_{11}$]{};
   \vertex (x12) at (1.5,7.5) [label=right:$x_{12}$]{};
   \path[->,thick,shorten >=0.5mm]
(x12) edge (x5)
(x12) edge (x4)
(x12) edge (x3)

(x11) edge (x3)
(x11) edge (x2)
(x11) edge (x1)

(x10) edge (x4)
(x10) edge [out=245,in=30] (x2)
(x10) edge [out=255,in=15](x1)

(x9) edge [out=120,in=-35](x6)
(x9) edge (x4)
(x9) edge (x3)

(x8) edge (x6)
(x8) edge (x5)
(x8) edge (x1)

(x7) edge [out=40,in=-0](x6)
(x7) edge (x5)
(x7) edge (x2)
	;
\draw (0.75, -1) node{$D^{\leftarrow}$};
\end{tikzpicture}
\end{center}
\caption{The two digraphs mentioned in the proof of Theorem~\ref{thm:(1,2)-step-bipartite}}
\label{fig:bipartite-tournaments-complete}
\end{figure}
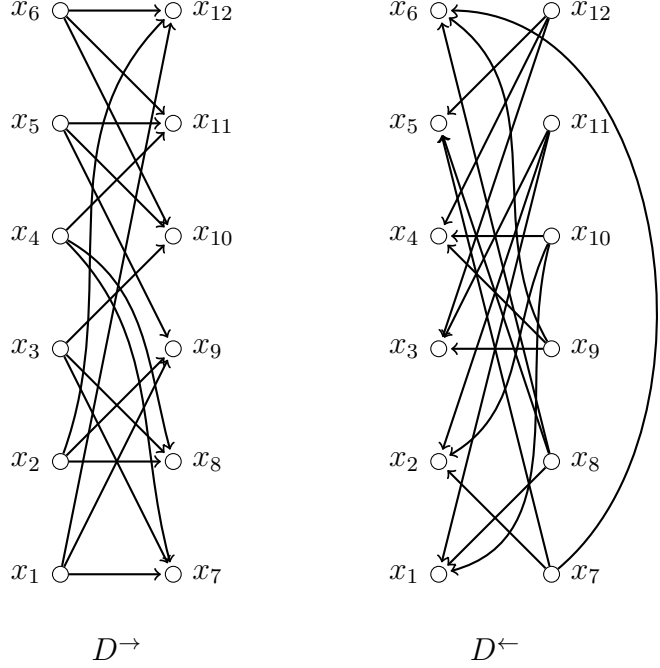

Now we obtain a $(1,2)$-step competitive orientation of $K_{10,5}$.
Let $U=\{u_1,u_2,\ldots,u_{10}\}$ and $V=\{v_1,v_2,\ldots,v_5 \}$.
We denote the subsets of $V$ with three elements by $V_1,V_2,\ldots$, and $V_{10}$.
Now we consider the bipartite tournament $D_2$ with bipartition $\{U,V\}$ and 
\[A(D_2) =\bigcup_{1\leq l \leq10}(\left \{ (u_l,v) \mid v \in V_l \} 
\cup \{ (v,u_l) \mid v \notin V_l\}   \right)
\]
This bipartite tournament is presented in the proof of Theorem 15 of \cite{choi20171} and is shown that any two vertices in a partite set $(1,1)$-step compete.
We can check that each vertex in $U$ has outdegree $3$
and each vertex in $V$ has outdegree $4$.
Then, by Theorem~\ref{thm:distinct-(1,2)-partite},
each vertex in $U$ and each vertex in $V$ $(1,2)$-step compete.
Hence $D_2$ is $(1,2)$-step competitive.
Thus we have shown that (a) is true.

{\it Case $2$}. $i+j\neq 3$. Then $i+j\geq 4$.
In this case, we will show that $K_{n_1,n_2}$ is $(i,j)$-step competitively orientable if and only if (b) or (c) holds.
To show the ``only if" part, 
    suppose that
    there exists an $(i,j)$-step competitive  orientation $D$ of $K_{n_1,n_2}$.
    Recall that $U$ and $V$ are the partite sets of $D$ such that $|U|=n_1$ and $|V|=n_2$.
 By Lemma~\ref{lem:i,j-step-outdegree}, each vertex in $U$ has at least two out-neighbors and so
    $n_2 \geq 2$. If $n_2=2$, then each vertex in $V$ is a common out-neighbor of the vertices in $U$ and so none of vertices in $V$ has an out-neighbor, which contradicts Lemma~\ref{lem:i,j-step-outdegree}.
    Thus $n_2 \geq 3$.
If $n_2 \geq 4$, then (b) holds.
Now suppose \begin{equation}\label{eq:thm:(1,2)-step-bipartite_1}n_2=3.	
 \end{equation}
Then, by Lemma~\ref{lem:i,j-step-outdegree},
\[2(n_1+3)=2(n_1+n_2)=2|V(D)|\leq \sum_{v\in V(D)}|N^+_D(v)|=|A(D)|=n_1n_2=3n_1.\]
Thus
\[n_1 \geq 6.\]
To the contrary,
suppose that $i=1$ or $j=1$.
Without loss of generality,
we may assume $i=1$.
Since $i+j \geq 4$,
$j\geq 3$.
Take two vertices $v_1$ and $v_2$ in $V$.
Then $v_1$ and $v_2$ $(1,j)$-step compete.
Let $x$ be a $(1,j)$-step common out-neighbor.
Then $x \in U$.
Therefore, by Lemma~\ref{lem:two-in-neighbors},
$x$ has at least two in-neighbors in $D$ and so, by \eqref{eq:thm:(1,2)-step-bipartite_1}, $x$ has at most one out-neighbor in $D$, which contradicts Lemma~\ref{lem:i,j-step-outdegree}. 
Hence we have shown that $i\geq 2$ and $j \geq 2$ and so (c) holds.

To show the ``if" part, suppose that (b) or (c) holds.
Then \[i+j\geq 4.\]
We let $U=\{u_1,u_2,\ldots,u_{n_1}\}$ and $V=\{v_1,v_2,\ldots,v_{n_2}\}$.
We first consider
an orientation $D_3$ of $K_{4,4}$ defined by
\[A(D_3)=\{(u_l,v_l),(u_l,v_{l+1}) \mid 1\leq l \leq 4\} \cup 
\{(v_{l},u_{l+1}),(v_{l},u_{l+2}) \mid 1\leq l \leq 4\} 
\]
(identify $u_{s+4}$ and $v_{s+4}$ with $u_s$ and $v_s$, respectively, for each $1\leq s \leq 3$).
We can check that every vertex in $D_3$ has two out-neighbors. 
Then, by Theorem~\ref{thm:distinct-(1,2)-partite}, each vertex in $U$ and each vertex in $V$ $(1,2)$-step compete.
Thus, by Proposition~\ref{prop:i,j-step_inclusion}(1), each vertex in $U$ and each vertex in $V$ $(i,j)$-step compete.
Next, we show that any two vertices in $U$ (resp.\ $V$) $(i,j)$-step compete.
It is easy to check that $u_l$ and $u_{l+1}$ (resp.\ $v_l$ and $v_{l+1}$) $(1,1)$-step compete in $D_3$ for each $1\leq l\leq 4$. 
Further, we note that 
 $u_{l}\to v_{l+1}\to u_{l+3}$ and $u_{l+2} \to v_{l+2} \to u_{l+3}$ (resp.\ $v_l \to u_{l+2} \to v_{l+3}$ and $v_{l+2} \to u_{l+3} \to v_{l+3}$) for each $1\leq l \leq 4$.
 Thus, any two vertices in $U$ (resp.\ $V$) $(2,2)$-step compete.
In addition, $u_l \to v_{l+1}$ and $u_{l+2}\to v_{l+3} \to u_{l+1} \to v_{l+1}$ (resp.\ $v_l\to u_{l+2}$ and $v_{l+2}\to u_l \to v_{l+1} \to u_{l+2}$) for each $1\leq l\leq 4$ and so any two vertices in $U$ (resp.\ $V$) $(1,3)$-step compete.
Then, by Proposition~\ref{prop:i,j-step_inclusion}(1), any two vertices in $U$ (resp.\ $V$) $(i,j)$-step compete.
Therefore $D_3$ is $(i,j)$-step competitive.

Now we consider an orientation $D_4$ of $K_{6,3}$ defined by
\begin{align*}
A(D_4)=&\{(u_{2l-1},v_l),(u_{2l},v_l),(u_{2l-1},v_{l+1}),(u_{2l},v_{l+1}) \mid 1\leq l \leq 3\}\\ & \cup \{(v_{l+2},u_{2l-1}),(v_{l+2},u_{2l}) \mid 1\leq l \leq 3\} 
\end{align*}
(identify $u_{s+6}$ and $v_{t+3}$ with $u_s$ and $v_t$, respectively, for every integers $s$ and $t$).
We can check that
any two vertices in $U$ have a common out-neighbor in $V$, that is, $(1,1)$-step compete, and
each vertex in $D_4$ has outdegree at least two.
Thus, by Theorem~\ref{thm:distinct-(1,2)-partite}, each vertex in $U$ and each vertex in $V$ $(1,2)$-step compete. 
We note that
$v_l \to u_{2l+1} \to v_{l+2}$ and 
$v_{l+1}\to u_{2l+3} \to v_{l+2}$
for each $1\leq l \leq 3$.
Thus any two vertices in $V$ $(2,2)$-step compete.
Then if $i\geq 2$ and $j\geq 2$,
we can conclude that any two vertices in $D_4$
$(i,j)$-step compete by Proposition~\ref{prop:i,j-step_inclusion}(1).
Hence we have shown $D_4$ is $(i,j)$-step competitive if $i\geq 2$ and $j\geq 2$.

Now, by applying  Corollary~\ref{cor:making-(i,j)-step-multi-complete} to $D_3$ and  $D_4$,
we may obtain an $(i,j)$-step competitive orientation of $K_{n_1,n_2}$ for (b)  $i+j \geq 4$ and $n_2 \geq 4$  and (c) $i\geq 2$, $j\geq 2$, $n_1 \geq 6$, and $n_2=3$, respectively.
Thus the ``if" part is true.
\end{proof}

\begin{proof}[Proof of Theorem~\ref{thm:tripartite-(i,j)}]
We first show the ``only if" part.
Suppose that there exists an $(i,j)$-step competitive orientation $D$ of $K_{n_1,n_2,n_3}$.
Then $D$ is nontrivial and so, by Lemma~\ref{lem:i,j-step-outdegree}, \begin{equation} \label{eq:thm:3-partite}
2|V(D)| \leq |A(D)|.
 \end{equation}
Let $V_1,V_2$, and $V_3$ be the partite sets of $D$ satisfying $|V_l|=n_l$ for each $1\leq l \leq 3$.
If $n_3 \geq 2$, then (a) holds and so we are done.
Suppose \[n_3=1.\]
If $n_2=1$, then $|V(D)|=n_1+2$ and so $|A(D)| = 2 n_1 + 1$, which contradicts \eqref{eq:thm:3-partite}.
Therefore $n_2 \geq 2$.
If $n_2 \geq 3$, then (b) holds.
Now suppose \[n_2=2.\] 
Then, by \eqref{eq:thm:3-partite},
\[2n_1+6 =2|V(D)|\leq |A(D)|=3n_1+2 \]
and so \[n_1 \geq 4.\]
Let \[V_2=\{v_1,v_2\} \quad \text{and}\quad V_3=\{v_3\}.\]
To the contrary, suppose either $i=1$ or $j=1$.
Without loss of generality, we may assume $i=1$.
Let $w_l$ be a $(1,j)$-step common out-neighbor of $v_l$ and $v_{l+1}$ in $D$ for each $1\leq l \leq 3$ (identify $v_4$ with $v_1$).
If $w_{\ell} \in V_1$ for some $\ell \in \{1,2,3\}$,
then $w_{\ell}$ has two in-neighbors by Lemma~\ref{lem:two-in-neighbors} and so $w_{\ell}$ has at most one out-neighbor in $D$, which contradicts Lemma~\ref{lem:i,j-step-outdegree}. 
Hence \[w_l \not\in V_1\] for each $1\leq l \leq 3$.
Then, since $w_l \neq v_l$ and $w_l\neq v_{l+1}$ for each $l=2,3$, 
$w_2=v_1$ and $w_3=v_2$. 
Therefore $v_3 \to v_2$ and $v_3\to v_1$.
Thus $w_1 \in V_1$, which is impossible and so $i\neq 1$.
Hence $i\geq 2$ and $j\geq 2$.
Thus (c) holds and so we have shown that the ``only if" part is true.

Now we show the ``if" part. We consider orientations $D_5$, $D_6$, and $D_7$ of
$K_{2,2,2}$, $K_{3,3,1}$, and $K_{4,2,1}$, respectively, given in Figure~\ref{fig:tripartite-tournaments-complete}.
We can check that
$D_5$ and $D_6$ are $(1,2)$-step competitive and
$D_7$ is $(2,2)$-step competitive.
Then, by applying Corollary~\ref{cor:making-(i,j)-step-multi-complete} and Proposition~\ref{prop:i,j-step_inclusion}(2) to $D_5$, $D_6$, and $D_7$, 
we may obtain an $(i,j)$-step competitive orientation of $K_{n_1,n_2,n_3}$ for (a) $n_3\geq 2$; (b) $n_2\geq 3$ and $n_3=1$; (c) $i\geq 2$, $j\geq 2$, $n_1 \geq 4$, $n_2=2$, and $n_3=1$, respectively.
\end{proof}

\begin{proof}[Proof of Theorem~\ref{thm:(i,j)-step-complete}]
We first show the ``only if" part.
Suppose that there exists an $(i,j)$-step competitive orientation $D$ of $K_{n_1,n_2,\ldots,n_k}$.
If $k \geq 5$, then (b) holds.
Suppose $k=4$.
If $n_2\geq 2$, then (ii) of (a) holds.
Now suppose $n_2=1$.
Then $n_3=n_4=1$ and so $|V(D)|=n_1+3$ and
$|A(D)|=3n_1+3$.
Thus, by Lemma~\ref{lem:i,j-step-outdegree}, \[2n_1+6=2|V(D)| \leq |A(D)|=3n_1+3\] and so
$n_1 \geq 3$.
Therefore (i) of (a) holds. Thus we have shown that the ``only if" part is true.

Now we show the ``if" part.
We consider $(1,2)$-step competitive orientations $D_8$, $D_9$, and $D_{10}$ of  $K_{3,1,1,1}$, $K_{2,2,1,1}$, and $K_{1,1,1,1,1}$, respectively, given in Figure~\ref{fig:tournaments-complete}.
By applying Corollary~\ref{cor:making-(i,j)-step-multi-complete} to $D_8$, $D_9$, and $D_{10}$,
we may obtain a $(1,2)$-step competitive orientation of $K_{n_1,n_2,\ldots,n_k}$ when
(a) $k=4$, and (i) $n_1 \geq 3$ and $n_2=1$ or (ii) $n_2 \geq 2$;
(b) $k\geq 5$, respectively.
Then, by Proposition~\ref{prop:i,j-step_inclusion}(2),
the ``if'' part is true.
\end{proof}

\begin{figure}
\begin{center}
\begin{tikzpicture}[x=1.0cm, y=1.0cm]
   \vertex (x1) at (0,0) [label=above:$$]{};
   \vertex (x2) at (0,1.5) [label=above:$$]{};
   \vertex (x3) at (1.5,0) [label=above:$$]{};
   \vertex (x4) at (1.5,1.5) [label=above:$$]{};
   \vertex (x5) at (3,0) [label=above:$$]{};
   \vertex (x6) at (3,1.5) [label=above:$$]{};
   \path[->,thick]
   (x1) edge (x3)
   (x3) edge  (x5)
   (x5) edge  [out=-135,in=-45] (x1)
   (x4) edge (x2)
   (x6) edge  (x4)
   (x2) edge [out=45,in=135] (x6)
   (x2) edge  [out=-25,in=140] (x5)
   (x4) edge  (x1)
   (x6) edge (x3)
   (x1) edge (x6)
   (x3) edge [out=135,in=-55] (x2)
   (x5) edge [out=120,in=-35] (x4)
	;
\draw (1.5, -1.5) node{$D_5$};
\end{tikzpicture}
\qquad \qquad 
\begin{tikzpicture}[x=1.0cm, y=1.0cm]
   \vertex (x1) at (0,0) [label=above:$$]{};
   \vertex (x2) at (0,1.5) [label=above:$$]{};
   \vertex (x3) at (0,3) [label=above:$$]{};
   \vertex (x4) at (1.5,0) [label=above:$$]{};
   \vertex (x5) at (1.5,1.5) [label=above:$$]{};
   \vertex (x6) at (1.5,3) [label=above:$$]{};
   \vertex (x7) at (3,3) [label=above:$$]{};
   \path[->,thick]
   (x3) edge [out=45,in=135] (x7)
   (x3) edge (x5)
   (x2) edge (x6)
   (x2) edge [out=30,in=-160] (x7)
   (x1) edge (x5)
   (x1) edge (x4)
   (x6) edge (x3)
   (x6) edge (x1)
   (x5) edge (x7)
   (x5) edge  (x2)
   (x4) edge (x2)
   (x4) edge  (x3)
   (x7) edge  (x4)
   (x7) edge (x6)
   (x7) edge  [out=-90,in=20] (x1)
	;
\draw (1.5, -1) node{$D_6$};
\end{tikzpicture}
\qquad \qquad 
\begin{tikzpicture}[x=1.0cm, y=1.0cm]
   \vertex (x1) at (0,0) [label=above:$$]{};
   \vertex (x2) at (0,1.5) [label=above:$$]{};
   \vertex (x3) at (0,3) [label=above:$$]{};
   \vertex (x4) at (0,4.5) [label=above:$$]{};
   \vertex (x5) at (1.5,0.75) [label=above:$$]{};
   \vertex (x6) at (1.5,3.75) [label=above:$$]{};
   \vertex (x7) at (2.5,2.25) [label=above:$$]{};
   \path[->,thick]
   (x1) edge [out=-45,in=270] (x7)
   (x1) edge (x6)
   (x2) edge (x6)
   (x2) edge (x7)
   (x3) edge (x5)
   (x3) edge (x7)
   (x4) edge (x5)
   (x4) edge [out=45,in=90] (x7)
   (x5) edge (x1)
   (x5) edge (x2)
   (x6) edge (x3)
   (x6) edge (x4)
   (x7) edge (x5)
   (x7) edge (x6) 
	;
\draw (1.5, -1) node{$D_7$};
\end{tikzpicture}
\end{center}
\caption{ The digraphs $D_5$ and $D_6$ are $(1,2)$-step competitive orientations of $K_{2,2,2}$ and $K_{3,3,1}$, respectively, and the digraph $D_7$ is a $(2,2)$-step competitive orientation of $ K_{4,2,1}$.}
\label{fig:tripartite-tournaments-complete}
\end{figure}
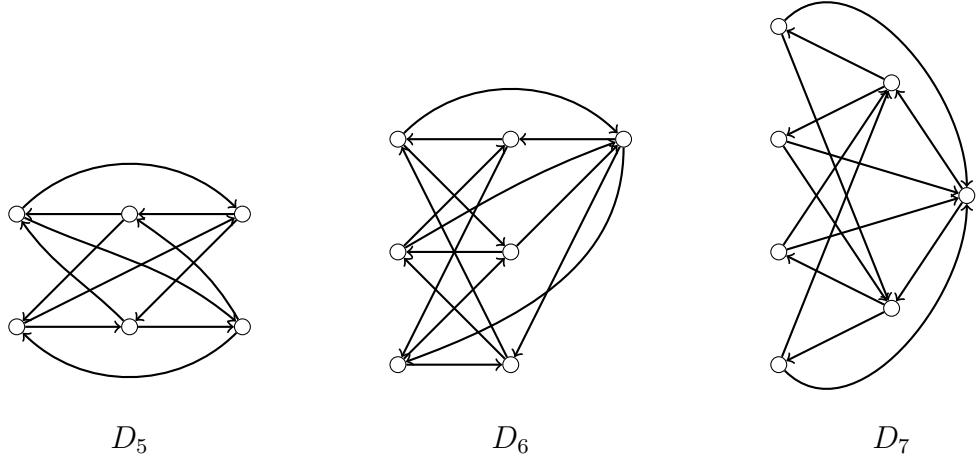

\begin{figure}
\begin{center}
\begin{tikzpicture}[x=1.0cm, y=1.0cm]
   \tikzset{middlearrow/.style={decoration={
  markings,
  mark=at position 0.95 with
  {\arrow{#1}},
  },
  postaction={decorate}
  }
  }

   \vertex (x1) at (0,0) [label=above:$$]{};
   \vertex (x2) at (0,1.5) [label=above:$$]{};
   \vertex (x3) at (0,3) [label=above:$$]{};
   \vertex (x4) at (1.5,1.5) [label=above:$$]{};
   \vertex (x5) at (3,1.5) [label=above:$$]{};
   \vertex (x6) at (4.5,1.5) [label=above:$$]{};
 \path[->,thick]
   (x3) edge  (x4)
  (x3) edge [out=-20,in=135](x5)
   (x2) edge [out=-35,in=-115](x5)
   (x2) edge [out=-45,in=-135](x6)
   (x1) edge (x4)
   (x1) edge [out=0,in=-100] (x6)
   (x4) edge (x2)
   (x4) edge (x5)
   (x5) edge (x1)
   (x5) edge (x6)
   (x6) edge [out=145,in=45] (x4)
   (x6) edge [out=140,in=0] (x3)
	;
\draw (2, -1) node{$D_8$};
\end{tikzpicture}
\qquad 
\begin{tikzpicture}[x=1.0cm, y=1.0cm]
   \vertex (x1) at (0,0) [label=above:$$]{};
   \vertex (x2) at (0,1.5) [label=above:$$]{};
   \vertex (x3) at (1.5,0) [label=above:$$]{};
   \vertex (x4) at (1.5,1.5) [label=above:$$]{};
   \vertex (x5) at (3,1.5) [label=above:$$]{};
   \vertex (x6) at (4.5,1.5) [label=above:$$]{};
   \path[->,thick]
   (x2) edge (x4)
   (x2) edge [out=20,in=135] (x5)
   (x1) edge (x3)
   (x1) edge [out=-25,in=-135] (x6)
   (x3) edge (x2)
   (x3) edge [out=10,in=-90] (x6)
   (x4) edge [out=-135,in=70](x1)
   (x4) edge (x5)
   (x5) edge (x3)
   (x5) edge (x6)
   (x5) edge (x1)
   (x6) edge [out=135,in=45](x2)
   (x6) edge [out=155,in=45](x4)
	;
\draw (2, -1) node{$D_9$};
\end{tikzpicture}
\qquad
 \begin{tikzpicture}[auto,thick]
       \tikzstyle{player}=[minimum size=5pt,inner sep=0pt,outer sep=0pt,draw,circle]

    \tikzstyle{player1}=[minimum size=2pt,inner sep=0pt,outer sep=0pt,fill,color=black, circle]
    \tikzstyle{source}=[minimum size=5pt,inner sep=0pt,outer sep=0pt,ball color=black, circle]
    \tikzstyle{arc}=[minimum size=5pt,inner sep=1pt,outer sep=1pt, font=\footnotesize]
    \path (0:1.5cm)+(0,1)     node [player]  (v0)
    {};
    \path (72:1.5cm)  +(0,1)   node [player]  (v1)  {};
    \path (144:1.5cm) +(0,1)    node [player]  (v2){};
    \path (216:1.5cm) +(0,1)    node [player]  (v3){};
 \path (288:1.5cm)   +(0,1)  node [player]  (v4){};

  \draw[black,thick,-stealth] (v0) - +(v1);
  \draw[black,thick,-stealth] (v0) - +(v2);

   \draw[black,thick,-stealth](v1) - +(v2);
  \draw[black,thick,-stealth] (v1) - +(v3); 
  
   \draw[black,thick,-stealth](v2) - +(v3);
  \draw[black,thick,-stealth] (v2) - +(v4);
  
  \draw[black,thick,-stealth](v3) - +(v4);
  \draw[black,thick,-stealth] (v3) - +(v0);
  
  \draw[black,thick,-stealth](v4) - +(v0);
  \draw[black,thick,-stealth] (v4) - +(v1);
  \draw (0, -1) node{$D_{10}$};
    \end{tikzpicture}
\end{center}
\caption{ 
The digraphs $D_8$, $D_9$, and $D_{10}$ are $(1,2)$-step competitive orientations of $K_{3,1,1,1}$, $K_{2,2,1,1}$, and $K_{1,1,1,1,1}$, respectively.}
\label{fig:tournaments-complete}
\end{figure}
\section*{Declaration of competing interest}
The authors declare that they have no known competing financial interests or personal relationships that could have
appeared to influence the work reported in this paper.
\section*{Data availability}
No data was used for the research described in the article.

\section*{Acknowledgement}
This research was supported by the National Research Foundation of Korea(NRF) grant funded by the Korea government(MSIT) (NRF-2022R1A2C1009648 and 2016R1A5A1008055).
\bibliographystyle{plain}

\end{document}